\documentclass[final]{amsart}

\usepackage{graphicx}
\usepackage[section]{algorithm}
\usepackage{algorithmic}
\usepackage{enumerate}
\usepackage{mathrsfs,enumerate}
\usepackage{qtree}
\usepackage{multirow,array}
\usepackage{graphicx,subfigure}
\usepackage[dvips]{color}
\usepackage{pstricks,pst-node,pst-tree, pst-plot, pst-eps}

\usepackage[notref,notcite]{showkeys} 


\def\partt#1#2{\frac {\partial #1} {\partial #2}}
\def\cF{{\mathcal F}}
\def\dH{\dot{H}}
\def\cT{{\mathcal T}}
\def\cQ{{\mathcal Q}}
\def\tpsi{{\widetilde \psi}}
\def\Forall{\qquad \hbox{for all }}
\def\hoip#1#2{\langle #1,#2\rangle_1}
\def\hzip#1#2{\langle #1,#2\rangle_0}
\def\hmip#1#2{\langle #1,#2\rangle_{-1}}
\def\dual#1#2{\langle #1,#2\rangle}
\def\beq#1\eeq{\begin{equation} #1 \end{equation}}
\def\bal#1\eal{\begin{aligned} #1 \end{aligned}}


\def\RR{{\mathbb R}}

\chardef\atsign='100


\theoremstyle{plain}
\newtheorem{theorem}{Theorem}[section]
\newtheorem{lemma}[theorem]{Lemma}
\newtheorem{proposition}[theorem]{Proposition}
\newtheorem{corollary}[theorem]{Corollary}

\theoremstyle{theorem}

\newtheorem{remark}{Remark}[section]

\graphicspath{{figures/}}

\begin{document}

\title[Numerical Approximation of Fractional Powers of Elliptic Operators]
{Numerical Approximation of Fractional Powers of Elliptic Operators}

\author{Andrea Bonito}
\address{Department of Mathematics, Texas A\&M University, College Station,
TX~77843-3368.}
\email{bonito\atsign math.tamu.edu}

\author{Joseph E. Pasciak}
\address{Department of Mathematics, Texas A\&M University, College Station,
TX~77843-3368.}
\email{pasciak\atsign math.tamu.edu}

\date{\today}

\begin{abstract} 
We present and study a novel numerical algorithm to approximate the 
action of $T^\beta:=L^{-\beta}$ where $L$ is a symmetric and positive
definite unbounded operator on a Hilbert space $H_0$.
The numerical method is based on a representation formula for
$T^{-\beta}$  
in terms of   Bochner integrals
 involving
$(I+t^2L)^{-1}$ for $t\in(0,\infty)$. 

To develop an approximation to $T^\beta$, we introduce a finite element
approximation $L_h$ to $L$ and base our approximation to $T^\beta$ on
$T_h^\beta:= L_h^{-\beta}$. The direct evaluation of $T_h^{\beta}$
is extremely expensive as it involves expansion in the basis of 
eigenfunctions for $L_h$. The above mentioned representation formula
holds for $T_h^{-\beta}$ and we propose three quadrature  approximations denoted generically by
$Q_h^\beta$.   The two  results of this paper bound the errors
in the $H_0$ inner product of 
$T^\beta-T_h^\beta\pi_h$ and $T_h^\beta-Q_h^\beta$ 
where $\pi_h$ is the $H_0$ orthogonal projection into the finite
element space.
We note that the evaluation of $Q_h^\beta$ involves application of
$(I+(t_i)^2L_h)^{-1}$ with $t_i$ being either a quadrature point or its
inverse.
Efficient solution algorithms for these
problems are available and the problems at different quadrature points
can be straightforwardly solved in parallel.
Numerical experiments illustrating the theoretical estimates 
are provided for both the
quadrature error $T_h^\beta-Q_h^\beta$ and the finite element error
$T^\beta-T_h^\beta\pi_h$. 
\end{abstract} 

\subjclass[2010]{35S15, 65R20, 65N12,	 65N50, 65N30.}

\maketitle

\section{Introduction.}

The mathematical study of  integral or nonlocal operators has received
much attention due to their wide range of applications, see for instance
\cite{ISI:000175019600004,MR2450437,MR0521262,MR2223347,MR2480109,MR1918950,MR660727,MR1727557,MR1709781}.
A prototype for such nonlocal operator results from the fractional
powers of an elliptic operator and is the focus of this paper.    A
canonical example is given by $L^\beta=(I-\Delta)^\beta$ on
$\RR^d$ and is naturally defined via Fourier transform, i.e., 
$$\cF(L^{\beta}f)(\zeta)= \cF((I-\Delta)^\beta f)(\zeta) = (1+|\zeta|^2)^\beta \cF(f)(\zeta).$$
Here $\cF(v)$ denotes the Fourier transform of $v$ and $\beta$ is any
real number.

We shall be concerned with similar problems, but posed on a bounded Lipschitz
domain $\Omega$.      Specifically, given $f$, we seek $u=L^{-\beta} f$
with $\beta\in (0,1)$ and $L$ given by, for example, $L=(I-\Delta)$. 
In this case, we need to augment the problem with
boundary conditions, e.g.,
\beq
\bal \partt{u}{n} &= 0, \qquad \hbox{on }\Gamma_N \subset \partial \Omega,\\
u&=0,\qquad \hbox{on } \Gamma_D:=\partial \Omega\setminus \Gamma_N.\eal
\label{e:bc}
\eeq 
Here $n$ denotes the outward pointing normal on $\Gamma_N$.
When $\Gamma_D$ has positive measure, we may also consider
$L=-\Delta$.
   
We are concerned with the numerical approximation of $u:=
L^{-\beta} f$.  This problem has been studied by numerous authors 
\cite{MR2354493,Gavrilyuk1,Gavrilyuk2,GHK2004,Abner,MR2252038,
MR2300467,MR2800568} (we give more details below). 
We note that
$L^{-\beta}$ is defined in terms of the spectral decomposition of $L$, i.e,  
$u:=L^{-\beta} f $ is given by
\begin{equation}\label{e:intro_spec}
u(x)= \sum_{i=1}^\infty \lambda_i^{-\beta} c_i \psi_i(x).
\end{equation}
Here $c _i$ are the Fourier coefficients of $f$ in the $L_2$-orthonormal
basis $\{ \psi_i(x)\}_{i=1}^\infty $ of eigenfunctions for  $L$ and  $\{\lambda_i\}_{i=1}^\infty \subset \mathbb R_+ \setminus \{ 0 \}$ are the respective eigenvalues.

There are several techniques for approximating 
$L^{-\beta} f$ in the literature.   The most natural approach is to
introduce a (finite dimensional) numerical approximation $L_h$ of $L$ and
obtain an approximation $u_h:=L^{-\beta}_h f$  using an expression
similar to \eqref{e:intro_spec} but  involving  the eigenfunctions and
eigenvalues of $L_h$ \cite{MR2300467,MR2252038,MR2800568}.
The difficulty with this approach is that the direct evaluation of the
resulting approximation is  computationally very expensive.
Indeed, it requires the computation of all eigenvectors and eigenvalues
of a large system of linear equations.  The purpose of this paper is to
provide a more efficient technique for approximating $u_h$  (and
hence $u$) which avoids the above mentioned eigenvalue/eigenvector computations. 

An alternative approach for approximating
$u:=L^{-\beta}f:=(-\Delta)^{-\beta}f$ 
(or a more general  elliptic partial differential equation) 
is based on a representation of $u$ via a ``Neumann to Dirichlet'' map \cite{MR2354493}.
Namely, $u$ is
given, up to a normalization constant, by $u(x)=v(x,0)$ where  
$v: \Omega \times \mathbb R_+ \rightarrow \mathbb R$ solves
$$
-\textrm{div}(y^{1-2\beta} \nabla v(x,y)) = 0, \qquad (x,y) \in \Omega \times \mathbb R_+
$$ 
together with the boundary conditions:
$$ \bal
v(\cdot,y)\hbox{ satisfies }&\hbox{\eqref{e:bc} for each }y\in \mathbb R_+,\\
\lim_{y\rightarrow \infty} v(x,y)&=0,\quad x\in \Omega,\\
\lim_{y\to 0^+} \big(-y^{1-2\beta}  v_y(x&,y)\big) = f(x),\quad
x\in \Omega.
\eal
$$ 
The above equations leads to a well posed variational problem in
an appropriately weighted Sobolev space \cite{Abner} which is amendable
to finite element approximation.
The numerical algorithm proposed and analyzed in \cite{Abner} consists
of a $\mathbb R^{d+1}$ finite element method which takes advantage of
the rapid decay of the solution $v$ in the $y$ direction enabling
truncation to a bounded domain of modest size.
Error estimates are derived in a weighted $H^1(\Omega)$-norm.

Another natural approach for representing $u:=L^{-\beta}f$ is via the
Dunford Taylor integral, i.e., 
\beq
u = \frac 1 {2\pi i} \bigg(\int_\gamma z^{-\beta} R_z(L) \, dz \bigg)f.
\label{e:DunT}
\eeq
Here $\gamma$ is a positively oriented simple closed curve (in the
extended complex plane) enclosing the
spectrum of $L$ and $R_z(L)=(zI-L)^{-1}$ is the resolvent. 
A quadrature approximation of the above integral is proposed in 
\cite{GHK2004}.  In fact, the quadrature proposed in \cite{GHK2004} 
but based on the hyperbolic path given in \cite{Gavrilyuk3} 
leads to an exponentially convergent quadrature approximation.
Our third scheme below is an example of another exponentially convergent
quadrature but, in our case, only involves multiple evaluations
of $(I+t_iL)^{-1}f$ where $t_i\in(0,\infty)$ is related to the quadrature
node.  There are numerous other papers proposing exponentially
convergent quadrature schemes 
for Dunford-Taylor integral representations of $e^{-tL}$, e.g., 
\cite{macleanthomee,hackbush,GHK2004}.

The algorithm developed here is also based on an integral representation
of $u=L^{-\beta}f$:
\beq\bal 
L^{-\beta}  &= \frac{2\sin(\pi \beta)}{\pi}  \bigg[ 
\int_0^\infty t^{2\beta-1} (I+t^2 L)^{-1}   \ dt
\bigg] \\
&=\frac{2\sin(\pi \beta)}{\pi}  \bigg[ 
\int_0^1 t^{2\beta-1} (I+t^2 L)^{-1}   \ dt +\int_0^1 t^{1-2\beta} (t^2I+ L)^{-1}   \ dt
\bigg],
\eal
\label{e:tbeta_intro}
\eeq 
We derive this identity  in Section~\ref{Irep} from  an identity in 
\cite{balakrishnan} 
(see, also 
\cite[Chapter 10.4]{MR1192782} and \cite[Chapter 9.11]{yoshida}).

We develop and analyze two different quadrature schemes for the last two
integrals above and a third for the first expression on the right of
\eqref{e:tbeta_intro} under the transformation $y=\ln(t)$, i.e., 
\beq
L^{-\beta}  = \frac{2\sin(\pi \beta)}{\pi}  \bigg[ 
\int_{-\infty}^\infty e^{2\beta y} (I+e^{2y}  L)^{-1}   \ dy
\bigg].
\label{transformed}
\eeq
The first quadrature is based on a $\beta$-dependent graded
mesh of $N$ subintervals per integral and a one point (rectangle) quadrature on each subinterval.
Theorem \ref{t:fe1} shows that 
there exists a constant $C_1$ only depending on $\beta$ and the smallest eigenvalue of $L$ such that
\begin{equation}\label{e:intro_quad}
\| (L_h^{-\beta} - \cQ_{R,h}^\beta ) f \|_{L^2(\Omega)} \leq C_1 N^{-1} \| f\|_{L^2(\Omega)},
\end{equation}
where $\cQ_{R,h}^\beta$ is the rectangle quadrature approximation to
\eqref{e:tbeta_intro} with $L$ replaced by $L_h$, a finite dimensional approximation of $L$. 
The second quadrature method starts with a dyadic partition of the interval $[0,1]$.
Each subinterval is further decomposed in $N$ intervals of the same length on which a $r$ point weighted Gaussian quadrature is applied.
Overall, this quadrature algorithm uses a number of quadrature points proportional to $r^2 N \ln N$ points. 
Theorem \ref{t:fe1} shows  that there exists a constant $C_2$  only depending on $\beta$ and the smallest eigenvalue of $L$ such that 
\begin{equation}\label{e:intro_quad_gauss}
\| (L_h^{-\beta} - \cQ_{G,h}^\beta ) f \|_{L^2(\Omega)} \leq C_2 N^{-2r} \| f\|_{L^2(\Omega)}.
\end{equation}
Here  $\cQ_{G,h}^\beta$ is the Gaussian quadrature approximation of
\eqref{e:tbeta_intro} with $L$ replaced by $L_h$.
The final quadrature involves $2N+1$ equally spaced points applied to
the integral \eqref{transformed} with a mesh
size $k=1/\sqrt N$ and yields an approximation $\cQ_{E,h}^\beta $ satisfying
\begin{equation}\label{e:intro_quad_expo}
\| (L_h^{-\beta} - \cQ_{E,h}^\beta ) f \|_{L^2(\Omega)} \leq C_3 e^{-c_4/
  k}  \| f\|_{L^2(\Omega)}.
\end{equation}
Here $C_3$ is a constant only depending on $\beta$ and the smallest eigenvalue of $L$ and $c_4$ is an absolute constant.
In what follows we denote $\cQ_{R,h}^\beta$, $\cQ_{G,h}^\beta$ or $\cQ_{E,h}^\beta$ generically by $\cQ_h^\beta$.

The final result  provides estimates for
$\|(L^{-\beta} - L_h^{-\beta} ) f \|_{L^2(\Omega)} $ when $L_h^{-1} f$ is a finite element approximation of $L^{-1} f$.  These estimates
depend on the elliptic regularity index $0<\alpha\le 1$ (see conditions (c)
and (d) of Section~\ref{s:elliptic}), e.g.,  $\alpha =1$ when $\Omega$
is convex, the coefficients of $L$ are smooth 
and $\Gamma_D=\emptyset$ or $\Gamma_N=\emptyset$.
Theorem \ref{t:tbmtbh} shows that under these conditions,
there exists a constant $C_5$ uniform in $h$ and such that
\begin{equation}\label{e:intro_space}\bal
\| (L^{-\beta}-L_h^{-\beta}\pi_h)f \|_{L_2(\Omega)} &\leq C_5
h^{2\alpha}\ln(1/h) \quad  \hbox{or}\\
 \| (L^{-\beta}-L_h^{-\beta}\pi_h)f \|_{L_2(\Omega)} &\leq C_5
h^{2\alpha}\eal
\end{equation}
depending on the smoothness of $f$.  Here $\pi_h$ is now the
$L^2(\Omega)$ projection onto the finite element subspace.
Combining \eqref{e:intro_quad},  \eqref{e:intro_quad_gauss}
or \eqref{e:intro_quad_expo}  with \eqref{e:intro_space} provides an
estimate for the difference between $L^{-\beta}f$ and its numerical
approximation  $Q_h^\beta f$  (see Corollary~\ref{c:final}). 
Although the second result of \eqref{e:intro_space} is well known 
when 
 $\alpha=1$ and $h^{2\alpha}$ is replaced by $h^{2\beta}$ 
(see \cite{FijitaSuzuki} and Remark \ref{FijitaSuzuki}),
we believe that our optimal order results are not available.

The computation of $Q_h^\beta f$ involves multiple
\emph{independent} evaluations of $\big(I+\big(t_i\big)^2
L_h\big)^{-1}f$ with $t_i\in (0,\infty)$
which can be implemented trivially in parallel.
These are just finite element approximations to the problem 
$\big(I+\big(t_i\big)^2 L\big) w=f$ (argumented with boundary
conditions) 
and hence their
implementation is classical.  Moreover, multigrid preconditioners are
available whose convergence rates can be guaranteed to be bounded 
 independently of the
parameters $t_i$ and $h$.  

For most of this work, we provide our analysis in a more general framework,
namely, we assume that we are given two 
Hilbert spaces $H^1 \subset H^0$ with $H^1$ compactly, continuously and densely
embedded in $H^0$.  
Denoting  $H^{-1}$ and $(H^0)'$ to be the respective duals leads to an
operator $T:H^{-1} \rightarrow H^1$ (its inverse is $L$ in the above
discussion).  The usual identification $H^1 \subset H^0 \cong (H^0)' \subset H^{-1}$
enables the definition of $T^\beta$  (see Section \ref{Irep} for the precise setting).
This more general setting contains a large class of operators $T$, for example,
the inverse of the Laplace-Beltrami operator defined over Riemannian surfaces.

The outline of the remainder of the paper is as follows.
In the next section, we motivate the integral identity \eqref{e:tbeta_intro}.
Section \ref{s:num_int} defines our three numerical integration schemes for
the scalar version  of 
 \eqref{e:tbeta_intro} (see \eqref{e:ilam})  and
 provides bounds for the quadrature error.
The main result of this section is Theorem \ref{l:qerror}.  This theorem
is also illustrated by numerical experiments.
The space discretization is developed in Section \ref{s:elliptic} for general elliptic operators. 
Theorem \ref{t:tbmtbh} provides the {\it a-priori} error estimate
\eqref{e:intro_space}.  The theoretical results are again followed by numerical illustration.

\section{Integral Representation of Fractional Powers}
\label{Irep}

We consider two Hilbert spaces $H^1$ and $H^0$  with inner products and
norms, $\langle\cdot,\cdot\rangle_i$ and $\|\cdot\|_i$, $i=0,1$, respectively, and
make the following
two assumptions:
\begin{enumerate} [(a)] 
\item 
$H^1$ is compactly and densely contained in $H^0$. 
\item There is a constant $c_0$ satisfying
\beq
\|u\|_0 \le c_0 \|u\|_1,\Forall u\in H^1.
\label{e:poin}
\eeq
\end{enumerate}

Let $H^{-1}$
denote the dual space of $H^1$ with norm
$$\|F\|_{-1} = \sup_{\phi\in H^1} \frac {\dual F\phi} {\|\phi\|_1},$$
where $\dual\cdot\cdot$ denotes the duality pairing.
We define $T:H^{-1}\rightarrow 
H^1$ by $TF:=u$ where for $F\in H^{-1}$, 
$TF\in H^1$ is the unique solution of 
$$\hoip {TF}\phi  = \dual F\phi ,\Forall \phi\in H^1.$$
It is clear that 
$T$ is a isomorphism of $H^{-1}$ onto $H^{1}$ and we denote $L$ 
to be its inverse.   We define 
$$D(L):= \{ v\in H^0\ : \ Lv\in H^0\}.$$

Our goal is to use \eqref{e:tbeta_intro} to approximate the
fractional powers of $T$, specifically,  $T^\beta f:=L^{-\beta}f$ 
for $0<\beta<1$ and
$f\in H^0$. For convenience we rewrite these integrals as
\beq\bal
T^{\beta}  &= C_\beta^{-1}   
\int_0^\infty t^{2\beta-1}\, T_1(t)  \ dt \\
&= C_\beta^{-1}  \bigg[ 
\int_0^1 t^{2\beta-1}\, T_1(t)  \ dt + \int_0^1 t^{1-2\beta}\, T_2(t) \ dt 
\bigg] 
\label{e:tbeta}
\eal\eeq
with
\begin{equation}\label{e:cbeta}
C_\beta := \int_0^\infty
t^{2\beta-1}(1+t^2)^{-1}dt = \frac{\pi}{2 \sin(\pi \beta)}
\end{equation}
(The last equality can be verified using the Beta function and the Euler reflexion
formula).
Here
 $T_2(t):= t^{-2} T_1(t^{-1})$
and 
$T_1(t):H^{-1}\rightarrow H^1$  is defined by $T_1(t) F := u$ 
with $u\in H^1$ solving
\beq
\hzip u\phi + t^2\hoip u\phi  = \dual F\phi ,\Forall \phi\in H^1.
\label{e:t1}
\eeq
As usual, the  function $f\in H^0$ is identified with the functional $F\in
H^{-1}$ is defined by 
$\dual F\phi = \hzip f\phi$, for all $\phi\in H^1$.
We view \eqref{e:tbeta} as an equality between elements of $B(H^0,H^0)$,
the set of bounded operators on $H^0$. 

We note that 
\beq
\|T_1(t) f\|_0 \le  \|f\|_0 
\label{e:b1}
\eeq
follows immediately from
\eqref{e:t1}.  As $u:=T_2(t)f$ is the solution of 
$$t^2\hzip u\phi + \hoip u\phi  = \hzip f \phi ,\Forall \phi\in H^1,$$
we have from \eqref{e:poin} that
$$\|u\|_0 \le c_0 \|u\|_1 \le c_0 \|f\|_{-1}.$$
It follows, invoking  \eqref{e:poin} again, that for $f\in H^0$, 
\beq
\|f\|_{-1} \le c_0 \|f\|_0
\label{e:hmb}
\eeq
and hence 
\beq
\| T_2(t)f \|_0 = \|u\|_0  \le c_0^2 \|f\|_0.
\label{e:b2}
\eeq

To make sense out of \eqref{e:tbeta}, we derive additional properties of $T$.
The compactness of $H^1$ in $H^0$ and \eqref{e:hmb} immediately imply that
$T:H^{-1} \rightarrow H^{-1}$ is compact.  Moreover, the norm on
$H^{-1}$ comes from the inner product
$$\hmip  FG = \dual F {TG},\Forall F,G\in H^{-1}$$
from which it immediately follows that $T$ is a symmetric and 
positive definite operator on $H^{-1}$.  Fredholm theory implies that
there is an $\hmip \cdot \cdot$ orthonormal basis of eigenfunctions 
$\{\psi_i\}$, $i=1,2,\ldots$ for $H^{-1}$.  The corresponding
(real) eigenvalues $\{\mu_i\}$ are positive and, along with 
their eigenvectors, 
can be reordered to be non-increasing with limit 0. 
Given $F\in H^{-1}$, we 
define 
$$T^\beta F := \sum_{i=1}^\infty \mu_i^\beta \, \hmip F {\psi_i} \, \psi_i.$$

The  functions $\tpsi_i = \mu_i^{1/2} \psi_i$,
$i=1,2,\ldots$, 
provide an orthonormal basis for $H^0$ (see Proposition~\ref{p:scale} below) so that
$$T^\beta f = \sum_{i=1}^\infty \mu^\beta\,  \hzip f {\tpsi_i}\,
\tpsi_i, \Forall f\in H^0.$$
Moreover, 
$$D(L) := \{ u\in H^0\ :\ Lu\in H^0\} = \{ u\in H^0\ : \
\sum_{i=1}^\infty  \mu_i^{-2} \hzip u {\tpsi_i}<\infty\}.$$
This implies that $L$ is a closed operator on $H^0$ with domain 
$D(L)$ and that the range of $L$ on $D(L)$ coincides with $H^0$.
We then have the following theorem.

\begin{theorem} \label{l:1}  Assume that $H^1$ and $H^0 $ are Hilbert 
spaces satisfying (a) and (b).  Then  for $0<\beta<1$,  the integrals
appearing in \eqref{e:tbeta} are Bochner integrable and the equalities
hold in                       $B(H^0,H^0)$.
\end{theorem}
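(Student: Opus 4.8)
The plan is to reduce everything to the orthonormal eigenbasis $\{\tpsi_i\}$ of $H^0$ together with the elementary scalar integral \eqref{e:cbeta}. First I would establish that the $B(H^0,H^0)$-valued integrands are Bochner integrable, and then verify the operator identities in \eqref{e:tbeta} by testing them against this basis. Throughout, the key point is that both sides are bounded operators on $H^0$, so equality on the basis suffices.

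For Bochner integrability I would argue in two steps: strong measurability and absolute integrability of the operator norm. Strong measurability follows because $t \mapsto T_1(t)$ is continuous in the norm of $B(H^0,H^0)$: subtracting the defining relations \eqref{e:t1} for $T_1(t)f$ and $T_1(t')f$, testing with $(T_1(t)-T_1(t'))f$, and invoking \eqref{e:poin} gives a local Lipschitz bound $\|T_1(t)-T_1(t')\|_{B(H^0,H^0)} \le C|t^2-(t')^2|$ on compact subsets of $(0,\infty)$. For the norm I would use \eqref{e:b1}, i.e.\ $\|T_1(t)\|_{B(H^0,H^0)} \le 1$, near the origin, and the sharper decay $\|T_1(t)\|_{B(H^0,H^0)} \le c_0^2 t^{-2}$ for large $t$; the latter follows by testing \eqref{e:t1} with $\phi=u$ to obtain $t^2\|u\|_1^2 \le \|f\|_0\|u\|_0 \le c_0\|f\|_0\|u\|_1$ and applying \eqref{e:poin} once more. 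Hence $\int_0^\infty t^{2\beta-1}\|T_1(t)\|_{B(H^0,H^0)}\,dt \le \int_0^1 t^{2\beta-1}\,dt + c_0^2\int_1^\infty t^{2\beta-3}\,dt < \infty$, the first term being finite precisely because $\beta>0$ and the second because $\beta<1$. The same two bounds, combined with the definition $T_2(t)=t^{-2}T_1(t^{-1})$ and \eqref{e:b2}, dispose of the two integrals over $(0,1)$ in the split form of \eqref{e:tbeta}.

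To prove the equality I would first compute the action of $T_1(t)$ on an eigenfunction. Writing $\lambda_i=\mu_i^{-1}$ for the eigenvalue of $L$ associated with $\tpsi_i$, the weak eigenvalue relation $\hoip{\tpsi_i}\phi = \lambda_i\hzip{\tpsi_i}\phi$ inserted into \eqref{e:t1} gives $T_1(t)\tpsi_i = (1+t^2\lambda_i)^{-1}\tpsi_i$. Since Bochner integrals commute with the bounded linear maps $S\mapsto S\tpsi_i$ and $\langle\,\cdot\,,g\rangle_0$, for every $g\in H^0$ I may write $\langle(\int_0^\infty t^{2\beta-1}T_1(t)\,dt)\tpsi_i,\,g\rangle_0 = (\int_0^\infty t^{2\beta-1}(1+t^2\lambda_i)^{-1}\,dt)\,\hzip{\tpsi_i}g$. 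After the substitution $s=t\sqrt{\lambda_i}$ the scalar integral evaluates to $C_\beta\lambda_i^{-\beta}=C_\beta\mu_i^\beta$ by \eqref{e:cbeta}, so $C_\beta^{-1}\int_0^\infty t^{2\beta-1}T_1(t)\,dt$ and $T^\beta$ agree on each $\tpsi_i$; being bounded operators and $\{\tpsi_i\}$ an orthonormal basis of $H^0$, they coincide in $B(H^0,H^0)$. The second equality in \eqref{e:tbeta} then follows from the change of variables $t\mapsto 1/t$, which converts $\int_1^\infty t^{2\beta-1}T_1(t)\,dt$ into $\int_0^1 s^{1-2\beta}T_2(s)\,ds$ by the very definition of $T_2$.

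The main obstacle is not the algebra on the eigenbasis, which is immediate, but the careful justification of interchanging the Bochner integral with evaluation at $\tpsi_i$ and with the inner product. This relies on the operator-norm measurability and the integrability estimate established above, and it is exactly here that the hypothesis $0<\beta<1$ is indispensable, simultaneously controlling the integrand at the origin (through $\beta>0$) and at infinity (through $\beta<1$).
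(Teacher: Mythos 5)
Your proof is correct, but it takes a genuinely different route from the paper's. The paper invokes Balakrishnan's formula $L^\alpha x = \frac{\sin(\alpha\pi)}{\pi}\int_0^\infty \lambda^{\alpha-1}(\lambda I+L)^{-1}Lx\,d\lambda$ from the operator-theory literature, applies it with $\alpha=1-\beta$, and obtains \eqref{e:tbeta} by the changes of variables $t^2=\lambda^{-1}$ and $t\mapsto t^{-1}$; the Bochner integrability is dispatched in a short remark using \eqref{e:b1} and \eqref{e:b2}. You instead verify the identity from scratch by diagonalizing: you compute $T_1(t)\tpsi_i=(1+t^2\lambda_i)^{-1}\tpsi_i$, reduce to the scalar integral \eqref{e:cbeta} via the substitution $s=t\sqrt{\lambda_i}$, and conclude by density of the span of the eigenbasis, having first established norm continuity of $t\mapsto T_1(t)$ and the decay $\|T_1(t)\|_{B(H^0,H^0)}\le\min(1,c_0^2t^{-2})$ to justify the Bochner integral and its interchange with $S\mapsto\langle S\tpsi_i,g\rangle_0$. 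All the individual steps check out (the Lipschitz estimate, the two norm bounds, the eigenvalue relation $\hoip{\tpsi_i}{\phi}=\mu_i^{-1}\hzip{\tpsi_i}{\phi}$, and the $t\mapsto 1/t$ substitution producing $T_2$). What your approach buys is self-containedness and a direct match with the paper's actual \emph{definition} of $T^\beta$, which is spectral --- the paper's route implicitly relies on the (standard but unstated) fact that Balakrishnan's $L^{-\beta}$ coincides with the spectral one. What the paper's route buys is generality: Balakrishnan's identity does not require a compact self-adjoint resolvent or a discrete spectrum, which is why the authors can remark that the results extend to non-symmetric coercive forms, a setting where your eigenbasis argument would not apply.
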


\begin{proof}  We note that if $z=(\lambda + L)^{-1}x$ for  $\lambda>0$
and $x\in H^0$,
$$\|z\|_0 \le \lambda^{-1} \|x\|_0.$$
This implies (cf.  \cite{balakrishnan}, \cite[Chapter 10.4]{MR1192782} and \cite[Chapter 9.11]{yoshida}) that for $\alpha \in
  (0,1)$ and $x\in D(L)$,  the following is a well defined Bochner
  integral and satisfies
$$  \frac {\sin(\alpha \pi) } \pi 
\int_0^\infty \lambda^{\alpha-1} (\lambda I+L)^{-1} Lx \ d\lambda=L^\alpha x.$$
Let $\beta$ be in $(0,1)$.  Applying the above identity with 
$\alpha=1-\beta$ gives
$$ L^{-\beta} y =\frac {\sin(\alpha \pi) } \pi 
\int_0^\infty \lambda^{\alpha-1} (\lambda I+L)^{-1}y \ d\lambda $$
for all $y$ in the range of $L$, i.e., all $y\in H^0$.
Making the change of variable $t^2=\lambda^{-1}$ gives the first
identity in \eqref{e:tbeta}.  Breaking the first integral of \eqref{e:tbeta}
into two integrals on   $(0,1)$ and $(1,\infty)$, respectively, 
and making the change of variables $t\rightarrow t^{-1}$ in the second 
gives the second expression of \eqref{e:tbeta}.
\end{proof}

\begin{remark}  We note that \eqref{e:b2} is equivalent to 
$$\|T_1(t)f\|_0 \le (c_0/t)^2 \|f\|.$$
Using this, \eqref{e:b1} and \eqref{e:b2}, it 
readily follow that the integrals appearing in \eqref{e:tbeta} 
are all well defined Bochner integrals.
\end{remark}

\begin{remark}  It is possible to extend these results to obtain
  negative powers of 
  coercive 
(non-symmetric) bilinear forms but this extension is beyond
the scope of the present paper.
\end{remark}

\section{Numerical Integration.}\label{s:num_int}

The numerical approximation to the integrals appearing in \eqref{e:tbeta}
is based on quadrature approximations to the scalar integrals:
\beq\bal
I(\lambda)&=\int_0^\infty t^{2\beta-1} (1+t^2\lambda)^{-1}dt\\
&= \int_0^1 t^{2\beta-1} (1+t^2\lambda)^{-1}dt + \int_0^1  t^{1-2\beta}
(t^2+\lambda)^{-1}dt=: I_1(\lambda)+I_2(\lambda).
\eal
\label{e:ilam}
\eeq
In the next sections, we develop and analyze three quadratures.

For latter reference, we define 
$$
f_\lambda(t)\equiv 
f^1_\lambda(t):= (1+\lambda t^2)^{-1}, \qquad \text{and} \qquad f^2_\lambda(t):= (t^2+\lambda)^{-1}.
$$

\subsection{Rectangle quadrature rule on graded partitions} \label{ss:rectangle}

We start with $I_1(\lambda)$.
We propose  a graded partition
of $[0,1]$ 
to cope with the singular behavior of $t^{2\beta-1}$ coupled with a one
point quadrature.
For a given positive integer $N$, we set 
\begin{enumerate} [(i)]
\item $t^N_{1,i}:=(i/N)^{\frac{1}{2\beta}}$ if $2\beta-1< 0$.
\item $t^N_{1,i}:=i/N$ if $2\beta-1\ge 0$.
\end{enumerate}
Case (i) above corresponds to refinement at $t=0$ and is necessary to
deal with the singularity of the integrand at zero.  We note that it is
possible to use the node formula of Case (i) even when $2\beta-1>0$.
This results in larger spacing of the nodes near $t=0$
compared to the uniform case and appears to be unnecessary.

The quadrature formula approximating $I_1(\lambda)$ reads
\begin{equation}\label{e:quad}
I_1^N(\lambda):=\sum_{i=0}^{N-1}
f^1_\lambda\big(t_{1,i}^{N,*}\big) 
\int_{t_{1,i}^N}^{t_{1,i+1}^N} t^{2\beta-1} =\sum_{i=0}^{N-1}
\frac{\big (t_{1,i+1}^N\big )^{2\beta} -\big(t_{1,i}^N\big)^{2\beta}
}{2\beta } 
f^1_\lambda\big(t_{1,i}^{N,*}\big).
\end{equation}
Here $t_{1,i}^{N,*}$ is any point in the interval
$(t_{1,i}^N,t_{1,i+1}^N]$.
In Case (i), the coefficient on the rightmost sum of \eqref{e:quad} 
simplifies to $ (2\beta
N)^{-1}$.  

We proceed similarly for $I_2(\lambda)$.  
In this case, the partitioning is given by
\begin{enumerate} 
\item [(iii)] $t^N_{2,i}:=(i/N)^{\frac{1}{2-2\beta}}$ if $1-2\beta< 0$.
\item [(iv)] $t^N_{2,i}:=i/N$ if $1-2\beta\ge 0$.
\end{enumerate}

We define
\beq\bal
I_2^N(\lambda):&=\sum_{i=0}^{N-1}
f^2_\lambda(t_{2,i}^{N,*}) 
\int_{t_{2,i}^N}^{t_{2,i+1}^N} t^{1-2\beta} \\
&=
\sum_{i=0}^{N-1} 
\frac{\big (t_{2,i+1}^N\big )^{2-2\beta} -\big(t_{2,i}^N\big)^{2-2\beta}
}{2-2\beta }
f^2_\lambda(t_{2,i}^{N,*}).\eal
\label{e:quad2}
\eeq
Now, $t_{2,i}^{N,*}$ is any point in the interval $(t_{2,i}^N,t_{2,i+1}^N]$.

Before estimating the quadrature error, we provide a bound on the ratio of two consecutive intervals.
\begin{lemma}[Subdivision Regularity]\label{l:mesh_reg}
Let $N>0$ and $t_i=(i/N)^\theta$ for $\theta > 0$, $i=0,...N$. 
Then, there exists constant $0<\rho(\theta)<\infty$ only depending on $\theta$ such that
\beq
\max_{i=1,..,N-1} \frac{t_{i+1}-t_i}{t_i-t_{i-1}} \leq \rho(\theta).
\label{e:lem31}
\eeq
\end{lemma}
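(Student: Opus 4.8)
The plan is to estimate the ratio of consecutive subinterval lengths directly from the definition $t_i=(i/N)^\theta$. First I would write the length of the $i$-th subinterval as
\[
t_{i+1}-t_i=\frac{1}{N^\theta}\big((i+1)^\theta-i^\theta\big),
\]
so that the $N$-dependence cancels in the ratio and we are reduced to bounding the purely combinatorial quantity
\[
\frac{(i+1)^\theta-i^\theta}{i^\theta-(i-1)^\theta}
\]
uniformly over $1\le i\le N-1$. Thus the claim is really a statement about the function $g(x)=x^\theta$ and its forward differences on consecutive unit intervals.

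The cleanest route is to apply the mean value theorem on each pair of intervals: there exist $\xi_i\in(i,i+1)$ and $\eta_i\in(i-1,i)$ with $(i+1)^\theta-i^\theta=\theta\,\xi_i^{\theta-1}$ and $i^\theta-(i-1)^\theta=\theta\,\eta_i^{\theta-1}$. The ratio then becomes $(\xi_i/\eta_i)^{\theta-1}$, and since $i-1<\eta_i<i<\xi_i<i+1$ we have the two-sided control $\tfrac{i-1}{i}<\eta_i/\xi_i$ and $\xi_i/\eta_i<\tfrac{i+1}{i-1}$. For $i\ge 2$ these ratios are bounded (by $3$ for the upper one), so $(\xi_i/\eta_i)^{\theta-1}$ is bounded by a constant depending only on $\theta$, regardless of whether $\theta\gtrless 1$; one simply takes the appropriate one of the two bounds according to the sign of $\theta-1$. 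The only remaining value is $i=1$, where the denominator is $1^\theta-0=1$ and the numerator is $2^\theta-1$, giving the explicit finite ratio $2^\theta-1$. Taking $\rho(\theta)$ to be the maximum of $2^\theta-1$ and the $i\ge 2$ bound yields a constant depending only on $\theta$, as required.

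An equivalent and perhaps more robust alternative, which avoids separating the $i=1$ case, is to show that the sequence of lengths $\ell_i:=(i+1)^\theta-i^\theta$ is comparable to $i^{\theta-1}$ up to $\theta$-dependent constants, i.e. there are constants $0<c_\theta\le C_\theta$ with $c_\theta\,i^{\theta-1}\le \ell_i\le C_\theta\,i^{\theta-1}$ for all $i\ge1$; this follows from convexity or concavity of $x\mapsto x^\theta$ together with the mean value estimate above. Then the ratio $\ell_i/\ell_{i-1}$ is bounded by $(C_\theta/c_\theta)\,(i/(i-1))^{\theta-1}$, and since $i/(i-1)\le 2$ for $i\ge2$ the whole expression is controlled by a constant $\rho(\theta)$.

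I do not expect any serious obstacle here: the statement is elementary and the mean value theorem argument is essentially complete once written out. The only point requiring a little care is the uniformity in $i$, specifically checking that the bound does not degenerate as $i\to N$ (it does not, since the relevant ratios tend to $1$) and handling the boundary index $i=1$ separately since $\eta_1$ would otherwise lie in $(0,1)$ where $x^{\theta-1}$ may blow up when $\theta<1$. Isolating $i=1$ with the explicit value $2^\theta-1$ disposes of this cleanly, so the main work is simply organizing the two regimes $\theta\le1$ and $\theta\ge1$ to pick the correct side of the two-sided bound.
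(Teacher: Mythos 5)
Your argument is correct, but it takes a genuinely different route from the paper's. The paper studies the single function $g(t)=\bigl(t^\theta-(t-1)^\theta\bigr)/\bigl((t-1)^\theta-(t-2)^\theta\bigr)$, whose values at the integers $t=i+1$ are exactly the ratios in question, shows via the strict convexity or concavity of $t^{1-\theta}$ that $g'$ never vanishes so that $g$ is monotone, and reads off the supremum at the endpoints; this yields the sharp constants $\rho(\theta)=2^\theta-1$ for $\theta>1$ and $\rho(\theta)=1$ for $\theta\le1$, which are then carried explicitly into the error bounds of Lemma~\ref{l:qerror} and Theorem~\ref{t:fe1}. You instead bound each ratio locally by the mean value theorem, writing it as $(\xi_i/\eta_i)^{\theta-1}$ with $\eta_i\in(i-1,i)$ and $\xi_i\in(i,i+1)$, which gives $3^{\theta-1}$ for $i\ge2$ and the exact value $2^\theta-1$ at $i=1$; this is equally valid and arguably more elementary (no global monotonicity claim to verify), at the cost of a non-sharp constant $\max\bigl(2^\theta-1,\,3^{\theta-1}\bigr)$ when $\theta>1$. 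Two minor points: your stated lower bound $\tfrac{i-1}{i}<\eta_i/\xi_i$ should read $\tfrac{i-1}{i+1}<\eta_i/\xi_i$, but this is harmless because for $\theta<1$ all you actually need is $\xi_i/\eta_i>1$, which gives $(\xi_i/\eta_i)^{\theta-1}<1$ directly; and your isolation of the index $i=1$ is indeed essential when $\theta<1$, since $\eta_1$ lies where $x^{\theta-1}$ blows up, exactly as you observe.
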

\begin{proof}
For $\theta=1$, the above fraction is identically 1 and hence
we take $\rho(1)=1$.   
For $\theta\neq 1$, we consider the function 
$$
g(t):= \frac{t^{\theta}-(t-1)^{\theta}}{(t-1)^{\theta}-(t-2)^{\theta}}.
$$
Note that $g(i+1)$ coincides with the fraction in \eqref{e:lem31}.   
Moreover, a simple computation shows that 
$g'(t)=0$ if and only if 
$$   (t-1)^{1-\theta} = \frac 1 2  (t^{1-\theta}+(t-2)^{1-\theta}).
$$
The strict concavity of $\theta<1$ and  strict convexity for $\theta
> 1$ of $t^{1-\theta}$ 
imply that $g'(t)$ cannot be zero for any $t$  and so $g(t)$ is monotone.
Hence we can take
\beq
\rho(\theta):= \sup_{t\in [0,\infty]} g(t) = \left\lbrace
\begin{array}{ll}
g(2) = {2^\theta-1} & \qquad \text{when} \qquad \theta >1, \\
\lim_{t\to\infty}g(t) = 1 & \qquad \text{when} \qquad \theta \leq 1, 
\end{array}
\right.
\label{rho}
\eeq
which ends the proof.
\end{proof}

We now provide an estimate on the quadrature error
$$
e(\lambda):=I_1(\lambda)-I_1^N(\lambda)+I_2(\lambda)-I_2^N(\lambda).
$$

\def\tois{t_{1,i}^{N,*}}
\begin{lemma}[Quadrature Error] \label{l:qerror}
For any $0<\beta<1$ and $\lambda>0$, we have
$$
|e(\lambda)| \leq \frac {  2+\rho(A^{-1}) \pi} {N A} +
\frac {  2+\rho(B^{-1})\pi} {\lambda N 
B} 
$$
where $\rho(\cdot)$ is given by \eqref{rho}. Here 
$A=2\beta$ when we use (i) and $A=1$ when $2\beta-1>0$  and we use (ii) .
Similarly, $B=2-2\beta$ when we use (iii) and $B=1$ when $1-2\beta>0$ and we use (iv).
 \end{lemma}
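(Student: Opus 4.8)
The plan is to bound the two quadrature errors $I_1(\lambda)-I_1^N(\lambda)$ and $I_2(\lambda)-I_2^N(\lambda)$ separately and add them, since $e(\lambda)$ is their sum. Because each quadrature integrates the singular factor ($t^{2\beta-1}$ or $t^{1-2\beta}$) \emph{exactly} and only samples the smooth resolvent factor $f^j_\lambda$ at one point, the natural object on the $i$-th subinterval $I_i=[t_i,t_{i+1}]$ is the weighted oscillation
$$E_i = \int_{I_i} w(t)\,\big(f_\lambda(t)-f_\lambda(t_i^{N,*})\big)\,dt,$$
with $w=t^{2\beta-1}$ for $I_1$ and $t_i^{N,*}$ the (arbitrary) sample node. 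First I would write $f_\lambda(t)-f_\lambda(t_i^{N,*})=\int_{t_i^{N,*}}^t f_\lambda'(s)\,ds$ and, since both $t$ and $t_i^{N,*}$ lie in $I_i$, bound the inner integral by the variation $V_i:=\int_{I_i}|f_\lambda'|$, giving $|E_i|\le W_i V_i$ with $W_i:=\int_{I_i} w$. Summing, $|I_1-I_1^N|\le(\max_i W_i)\sum_i V_i$.

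The key observation is that the graded nodes are chosen precisely so that the singular weight is equidistributed: in case (i), $\big(t_{1,i}^N\big)^{2\beta}=i/N$ forces $W_i=\frac{1}{2\beta N}=\frac{1}{AN}$ for every $i$, and analogously $W_i=\frac{1}{BN}$ in case (iii). In the uniform cases (ii) and (iv) the weight is increasing, so $\max_i W_i=W_{N-1}$, which a one-line Bernoulli estimate $(1-1/N)^p\ge 1-p/N$ (with $p=2\beta\ge 1$, resp. $p=2-2\beta\ge 1$) again bounds by $1/(AN)$, resp. $1/(BN)$. Since $f^1_\lambda$ and $f^2_\lambda$ are monotone on $(0,1)$, the variations collapse: $\sum_i V_i=f^1_\lambda(0)-f^1_\lambda(1)=\frac{\lambda}{1+\lambda}\le 1$ for $I_1$, and $\sum_i V_i=f^2_\lambda(0)-f^2_\lambda(1)=\frac{1}{\lambda(1+\lambda)}\le\frac1\lambda$ for $I_2$. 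Combining gives $|I_1-I_1^N|\le\frac{1}{AN}$ and $|I_2-I_2^N|\le\frac{1}{\lambda BN}$, hence a bound of the asserted form (in fact with the cleaner constant $1$ in place of $2+\rho(\cdot)\pi$, which a fortiori yields the statement).

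If one instead estimates $E_i$ through a pointwise derivative bound $|E_i|\le W_i\,(t_{i+1}-t_i)\sup_{I_i}|f_\lambda'|$, then Lemma~\ref{l:mesh_reg} is exactly what converts the sampled suprema back into an integral: the regularity constant $\rho(A^{-1})$ (resp. $\rho(B^{-1})$) is the subdivision regularity of the graded mesh with exponent $\theta=1/A$ (resp. $1/B$), controlling the ratio of adjacent subintervals and thus the overshoot of a supremum–Riemann sum over $\int_0^1|f_\lambda'|$. The factor $\pi$ then enters by dominating the derivative by the resolvent itself, e.g. $|(f^1_\lambda)'(t)|\le\sqrt\lambda\,(1+\lambda t^2)^{-1}$ (from $(1-\sqrt\lambda\,t)^2\ge 0$), and using $\int_0^\infty(1+\lambda t^2)^{-1}\,dt=\frac{\pi}{2\sqrt\lambda}$ via the arctangent, with a separate endpoint term supplying the additive $2$. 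This route produces a bound of exactly the additive shape $2+\rho(A^{-1})\pi$, resp. $2+\rho(B^{-1})\pi$, explaining the stated constants.

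I expect the main obstacle to be obtaining estimates uniform in $\lambda\in(0,\infty)$ while simultaneously accommodating the arbitrary sample point $t_i^{N,*}$ and the integrable singularity of $w$ at $t=0$. In particular, extracting the correct $1/\lambda$ weight in the $I_2$-term—so that, after the factor $\lambda$ appearing in the spectral sum, the estimate stays bounded both as $\lambda\to 0$ and $\lambda\to\infty$—is the delicate point, and it is precisely what the split of $I(\lambda)$ into $I_1$ and $I_2$, together with the two different gradings, is designed to deliver.
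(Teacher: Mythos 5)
Your argument is correct and, in its primary form, takes a genuinely different route from the paper's. The paper bounds the oscillation of $f_\lambda$ on each subinterval by $(t_{1,i+1}^N-t_{1,i}^N)\sup_{I_i}|f_\lambda'|$, dominates the derivative by $2\lambda^{1/2}(1+\lambda t^2)^{-1}$, and then needs the subdivision-regularity constant $\rho$ of Lemma~\ref{l:mesh_reg} to shift indices and recognize the resulting sum as a lower Riemann sum for $\int_0^\infty(1+x^2)^{-1}\,dx$; the first subinterval must be treated separately by the crude bound $2\int_0^{t_{1,1}^N}t^{2\beta-1}\,dt\le 2/(NA)$, and these two steps are exactly the source of the constants $2+\rho(A^{-1})\pi$ and $2+\rho(B^{-1})\pi$ --- your third paragraph reconstructs this route essentially verbatim. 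Your main argument instead replaces the pointwise derivative bound by the total variation: since each $f^j_\lambda$ is monotone decreasing on $[0,1]$, the local oscillations $V_i=\int_{I_i}|(f^j_\lambda)'|$ telescope to $\frac{\lambda}{1+\lambda}\le 1$ (resp.\ $\frac{1}{\lambda(1+\lambda)}\le\lambda^{-1}$), while the exactly-integrated weight masses satisfy $\max_i W_i\le 1/(AN)$ (resp.\ $1/(BN)$) by construction of the grading, or by the mean value/Bernoulli estimate in the uniform cases (ii) and (iv). This yields $|e(\lambda)|\le (AN)^{-1}+(\lambda BN)^{-1}$, which is sharper than the stated bound and hence implies it; it also dispenses entirely with Lemma~\ref{l:mesh_reg}, the factor $\pi$, and the separate first-interval estimate, since the oscillation bound is insensitive to the integrable singularity of the weight at $t=0$ and to the arbitrary placement of the sample node $t_{j,i}^{N,*}$. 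What the paper's machinery buys is robustness --- the sup-of-derivative argument does not rely on monotonicity of the resolvent factor --- but for these particular integrands your telescoping argument is both shorter and quantitatively stronger.
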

\begin{proof}
Let us start by examining $e^1(\lambda):=I_1(\lambda)-I_1^N(\lambda)$.
We first observe that when we use (i),  the mean value theorem implies
\begin{equation}\label{e:quad_error}\bal
\bigg |\int_{t_{1,i}^N}^{t_{1,i+1}^N} t^{2\beta-1} (f_\lambda(t) &-
f_\lambda\big(\tois\big))\ dt\bigg| \\
&\leq \max_{s,t\in (
  t_{1,i}^N,t_{1,i+1}^N]} |f_\lambda(t)-f_\lambda(s)|   \int_{t_{1,i}^N}^{t_{1,i+1}^N} t^{2\beta-1}\\
&\leq
\frac{1}{ NA} (t_{1,i+1}^N-t_{1,i}^N)\  \| (f_\lambda)'
\|_{L_\infty([t_{1,i}^N,t_{1,i+1}^N])}.
\eal
\end{equation}
In contrast, if $2\beta-1>0$ and we use (ii),  the mean value theorem 
implies that 
\beq
\frac {\big (t_{1,i+1}^N\big )^{2\beta}
  -\big(t_{1,i}^N\big)^{2\beta}}{2\beta}\le \frac {\big (t_{1,i+1}^N\big
  )^{2\beta-1}}N \le \frac1N.
\label{e:mvt}
\eeq
and hence
\beq \label{e:quad_error2}
\bal \bigg |\int_{t_{1,i}^N}^{t_{1,i+1}^N} t^{2\beta-1} (f_\lambda(t)& -
f_\lambda\big(\tois\big))\ dt\bigg|\\
&\leq \frac{\big (t_{1,i+1}^N\big
  )^{2\beta} -\big(t_{1,i}^N\big)^{2\beta} 
}{2\beta } \max_{s,t\in (
  t_{1,i}^N,t_{1,i+1}^N]} |f_\lambda(t)-f_\lambda(s)|  
\\& \le  \frac{1}{NA} 
 (t_{1,i+1}^N-t_{1,i}^N)\  \| (f_\lambda)'
 \|_{L_\infty([t_{1,i}^N,t_{1,i+1}^N])}
\eal
\eeq

We claim that for any $t>0$,
$$
|(f_\lambda)'(t)| = \frac{2\lambda t}{(1+\lambda t^2)^2} \leq \frac{2\lambda^{1/2}}{1+\lambda t^2}.
$$
To see this, we distinguish two cases.
If $\lambda^{1/2}t \leq 1$ then 
$$
|(f_\lambda)'(t)| \leq \frac{2\lambda^{1/2}}{(1+\lambda t^2)^2}  \leq \frac{2\lambda^{1/2}}{1+\lambda t^2}.
$$
Alternatively, if $\lambda^{1/2} t >1$ then
$$
|(f_\lambda)'(t)| \leq 2\lambda^{1/2} \frac{\lambda t^2}{(1+\lambda t^2)^2} = 2\lambda^{1/2} \left( \frac{1}{1+\lambda t^2}-\frac{1}{(1+\lambda t^2)^2} \right)
\leq \frac{2\lambda^{1/2}}{1+\lambda t^2}.
$$
Thus,
\beq
\|( f_\lambda)' \|_{L_\infty([t_{1,i}^N,t_{1,i+1}^N])} 
\leq  \frac{2\lambda^{1/2}}{1+\lambda (t_{1,i}^N)^2}.
\label{e:linf}
\eeq

It follows from  \eqref{e:quad_error}, \eqref{e:quad_error2} and \eqref{e:linf} that 
$$
|e^1(\lambda)|
\le |e^1_0(\lambda)|+
\frac{2}{ NA} \sum_{i=1}^{N-1}  (t_{1,i+1}^N-t_{1,i}^N)
\frac{\lambda^{1/2}}{1+\lambda (t_{1,i}^N)^2}.
$$
where 
$$e^1_0(\lambda):=\int_0^{t_1} t^{2\beta-1}\big(f_\lambda(t) -
f_\lambda\big(t_{1,0}^{N,*}\big)\big)\ dt.$$
Setting  $x_i=\lambda^{1/2} t_{1,i}^N$ gives
$$
|e^1(\lambda)| \leq   
 |e_0^1(\lambda)|+
\frac{2}{NA} \sum_{i=1}^{N-1} 
\frac{x_{i+1}-x_i}{1+x_i^2}.
$$

By Lemma~\ref{l:mesh_reg}, 
$$\max_{i=1,..,N-1} \frac{x_{i+1}-x_i}{x_i-x_{i-1}}=
\max_{i=1,..,N-1} \frac{t_{i+1}-t_i}{t_i-t_{i-1}} \leq \rho(A^{-1}).
$$
Thus,
\begin{align*}
|e^1(\lambda)| 
&\leq  |e^1_0(\lambda)|+\frac{2\rho(A^{-1})}{ NA} \sum_{i=1}^{N-1}
\frac{x_{i}-x_{i-1}}{1+x_i^2}
\\
&\leq
|e^1_0(\lambda)|
+\frac{2\rho(A^{-1})}{NA} \int_0^\infty \frac{1}{1+x^2}dx\\
&=
|e^1_0(\lambda)|
+\frac{2\rho(A^{-1})}{NA} \pi
\end{align*}
where the second inequality followed from the 
 monotone decreasing property of
$1/(1+x^2)$.

The error on the first interval is bounded by
\beq\label{e:fi}
\bal
|e^1_0(\lambda)|&\le  \int_0^{t_{1,1}^N}
t^{2\beta-1}\big(|f_\lambda(t)| +
|f_\lambda\big(t_{1,0}^{N,*}\big)|\big)\, dt\\
&\leq 2 \int_0^{t_{1,1}^N} t^{2\beta-1} \le
 \frac{2}{NA}.
\eal
\eeq
The last inequality followed from 
\eqref{e:mvt} when $2\beta-1>0$ and we use  (ii) and is an obvious equality in Case
(i).
 
Gathering the above estimates, we arrive at
\beq
|e^1(\lambda)| \leq \frac  {2+2\rho(A^{-1}) \pi} {NA}.
\label{e:e1}
\eeq

To bound $e^2(\lambda):=I_2(\lambda)-I_2^N(\lambda)$, we first observe that
$$I_2(\lambda)=\lambda^{-1}\int_0^1  t^{1-2\beta}
f_{\lambda^{-1}}(t)\, dt.
$$
and
$$
I_2^N(\lambda)
=\lambda^{-1}\sum_{i=0}^{N-1}
f_{\lambda^{-1}} \big(t_{2,i}^{N,*}\big) 
\int_{t_{2,i}^N}^{t_{2,i+1}^N} t^{1-2\beta}\, dt.
$$
We see that the quadratures $I_1^N$ and $I_2^N$ are similarly constructed
only differing in that the partitioning is tailored to the power of
$t$ in the approximated integral.   
All of the above arguments apply to $\lambda (I_2-I_2^N)$ and 
lead to 
\beq
\lambda |e^2(\lambda)| \leq \frac {2+2\rho(B^{-1}) \pi}  {NB}.
\label{e:e2}
\eeq
Combining \eqref{e:e1} and \eqref{e:e2} completes the proof of the 
lemma.
\end{proof}

To illustrate the sharpness of the quadrature error estimate
(Lemma~\ref{l:qerror}), we (approximately) compute 
$\|e(\lambda)\|_{L^\infty(10,\infty)}$ by the following algorithm:
\begin{enumerate} [(I)]
\item We examine the value of $|e(\lambda_i)|$ for $\lambda_i=10 \mu^i$ for
$i=0,1,\ldots M$.   Here $\mu >1$ and $M$ is chosen so large that
  $|e(\lambda_i)|$ appears monotonically decreasing for larger $i$.
In any event, $\lambda_M$ is on the order of $10^7$ or larger, a value
which is well beyond our range of applications.
\item  We then refine in a neighborhood of the $\lambda_j$ which obtains the 
maximum of $|e(\lambda_i)|$, $i=0,\ldots,M$ and select the maximizing
$\lambda$ from the refined values.
\end{enumerate}  
Proving the convergence of such a procedure is beyond the scope of this
paper however the plots of $|e(\lambda_i)|$ appear to be sufficiently
well behaved and suggest that the algorithm works.

The values of $\|e(\lambda)\|_{L^\infty(10,\infty)}$  (as computed
above) are reported in Table~\ref{t1} as a function of
$\beta=1/4,1/2,3/4$ and $N$.  For these runs, we chose
$t_{j,k}^{N,*}$  to be the node corresponding to a one point weighted 
Gaussian quadrature.  Specifically, 
$$Q_{1,k}(f)= (t_{1,k+1}^{N}-t_{1,k+1}^{N}) f\big(t_{1,k+1}^{N,*}\big) 
\approx \int^{t_{1,k+1}^{N}}_{t_{1,k+1}^{N}} t^{1-2\beta} f \, dx$$
and
$$Q_{2,k}(f)= (t_{2,k+1}^{N}-t_{2,k+1}^{N}) f\big(t_{2,k+1}^{N,*}\big) 
\approx \int^{t_{2,k+1}^{N}}_{t_{2,k+1}^{N}} t^{2\beta-1} f \, dx$$
with $t_{j,k}^{N,*}$ chosen to make the schemes exact for cubics.

In the case of $\beta=.5$, there is no distinction between the cases
(i)-(ii) and (iii)-(iv) since all result in equally spaced meshes.   
When $\beta=.25$, we used (i) and (iv) for the respective integrals.
We note, however, that switching from (iv) to (iii) in this case made 
negligible  difference in the observed maximum errors. 
For $\beta=.75$, the first column reports (ii)-(iii). 
In this case, switching to (i)-(iii) leads to a slower convergent first order method.


\begin{table}
\begin{tabular}{|c|c|c|c|c|c|} \hline
$N$ & $\beta=.5$&$\beta=.75$
&$\beta=.25$\\ 
\hline
$31      $&$ 2.86\times 10^{-3} $&$	1.60\times 10^{-4}$&$7.77\times 10^{-3}$ \\ \hline	
$63	  $&$ 1.40\times 10^{-3} $&$	5.51\times 10^{-5}$&$	3.82\times 10^{-3}$\\ \hline	
$127 	  $&$ 6.98\times 10^{-4} $&$	1.93\times 10^{-5} $&$1.89\times 10^{-3}$\\ \hline	
$255	  $&$ 3.45\times 10^{-4} $&$	6.75\times 10^{-6}$&$	9.45\times 10^{-4}$\\ \hline	
$511	  $&$ 1.73\times 10^{-4} $&$	2.39\times 10^{-6}$&$	4.71\times 10^{-4}$\\ \hline	
$1023	  $&$ 8.66\times 10^{-5} $&$	8.38\times 10^{-7}$&$	2.35\times 10^{-4}$
\\ \hline
\end{tabular}
\caption{$\|e(\lambda)\|_{L^\infty(10,\infty)}$ for different $\beta$
  (rectangular scheme).
The observed order is $N^{-1}$ ($\beta=0.5$ and $\beta=0.25$) and 
 $N^{-3/2}$ $(\beta=0.75)$.}
\label{t1}
\end{table}

%

\subsection{Gaussian quadrature rule on geometric partitioning}\label{ss:gauss}

We start with the numerical integration of $I_1(\lambda)$.
In contrast with the graded subdivision used in Section \ref{ss:rectangle}, we propose  this time a geometrically refined partition of $[0,1]$ 
to cope with the singular behavior of $t^{2\beta-1}$.

For a given positive integer $M$ yet to be chosen, we consider the dyadic partition $[0,1]= \cup_{i=1}^M I_i \cup  [0,2^{-M}] $, where  
$$
I_i = \lbrack 2^{-i},2^{-i+1} \rbrack , \qquad i=1,...,M.
$$
Furthermore, given an integer $N$, each of these intervals $I_i$ are decomposed onto $N$ uniform subintervals $I_{i,j}$, $j=1,...,N$, of length 
\begin{equation}\label{e:length}
| I_{i,j} | = \frac{1}{2^{i}N}, \qquad i=1,...,M, \quad j=1,...,N.
\end{equation}
Given an integer $r\geq 1$, a $r$ points weighted Gaussian quadrature with weight $w(t)=t^{2\beta-1}$ is applied  on each intervals $I_{i,j}$.
We denote by $t_{i,j}^{1,k}$ and by $A_{i,j}^{1,k}$, $1\leq i \leq M$, $1\leq j \leq N$ and $1\leq k \leq r$  the Gaussian quadrature points and weights respectively.
Hence, the quadrature formula on $I_{i,j}$ reads
\begin{equation*}
I_{i,j}^{N,r}(\lambda):=\sum_{k=1}^{r} f^1_\lambda(t_{i,j}^{1,k}) A_{i,j}^{1,k}.
\end{equation*}
and we propose the following expression for the approximation of $I_1(\lambda)$:
$$
I_1(\lambda) \approx I_1^{N,r}(\lambda):= \sum_{i=1}^M \sum_{j=1}^N I_{i,j}^{N,r}(\lambda).
$$
Notice that we dropped the contribution of the first interval $[0,2^{-M}]$.
The quadrature error $e_1(\lambda):=I_1(\lambda)-I_1^{N,r}(\lambda)$ is given by
\begin{equation}\label{e:quad_error_1}
e_1(\lambda) =  \sum_{i=1}^M \sum_{j=1}^N e_{i,j}^1 + \int_0^{2^{-M}} t^{2\beta-1}f_\lambda^1(t)dt
\end{equation}
where $e_{i,j}^1$ are the  quadrature errors over each subintervals $I_{i,j}$ defined as
\begin{equation}\label{e:error_quad}
e_{i,j}^1(\lambda):= \int_{I_{i,j}} t^{2\beta-1} f_\lambda^1(t)dt - \sum_{k=1}^{r} f^1_\lambda(t_{i,j}^k) A_{i,j}^k.
\end{equation}
At this point, it is worth recalling that for some $\xi_{i,j} \in I_{i,j}$, we have the following  expression for the local quadrature errors
\begin{equation}\label{e:error_gauss}
e_{i,j}^1(\lambda) = \frac{1}{(2r)!} \frac{d^{2r}}{dt^{2r}} f_{\lambda}^1(\xi_{i,j}) \int_{I_{i,j}} t^{2\beta-1} (q^1_{i,j})^2(t) dt, 
\end{equation}
where $q^1_{i,j}(t) = \prod_{k=1}^r(t-t_{i,j}^{1,k})$.

We proceed similarly for $I_2(\lambda)$.  
In this case, we set $w(t):=t^{1-2\beta}$ so that the quadrature error for the second integral $I_2(\lambda)$ reads
\begin{equation}\label{e:quad_error_2}
e^2(\lambda)=\sum_{i=1}^M \sum_{j=1}^N e^2_{i,j}+\int_0^{2^{-M}} t^{1-2\beta} f_\lambda^2(t)dt,
\end{equation}
where 
$$
e_{i,j}^2(\lambda):= \int_{I_{i,j}} t^{1-2\beta} f_\lambda^2(t)dt - \sum_{k=1}^{r} f^2_\lambda(t_{i,j}^{2,k}) A_{i,j}^{2,k}.
$$
Here $t_{i,j}^{2,k}$ and $A_{i,j}^{2,k}$ are the Gaussian points and weights.
Again, the local quadrature error $e_{i,j}^2(\lambda)$ satisfies 
\begin{equation}\label{e:error_gauss2}
e_{i,j}^2(\lambda) = \frac{1}{(2r)!} \frac{d^{2r}}{dt^{2r}} f_{\lambda}^2(\xi_{i,j}) \int_{I_{i,j}} t^{1-2\beta} (q^2_{i,j})^2(t) dt
\end{equation}
for some $\xi_{i,j} \in I_{i,j}$ and where $q_{i,j}^2(t):=\prod_{k=1}^r(t-t_{i,j}^{2,k})$.

Notice that the dependency in the number of geometric intervals $M$ is never specified in the above defined quantities. 
In fact, we are now in position to provide its value
\begin{equation}\label{d:M}
M := \left\lceil \frac{r \log_2(N)}{\min(\beta,1-\beta)} \right\rceil.
\end{equation}
A slightly better algorithm is obtained by using a different number of
geometric intervals for each integral, i.e.,
\begin{equation}\label{d:M1}
M_1 := \left\lceil \frac{r \log_2(N)}{\beta} \right\rceil\quad 
\hbox{ and } \quad M_2 := \left\lceil \frac{r \log_2(N)}{1-\beta} \right\rceil.
\end{equation}
As we shall see in the proof of Lemma \ref{l:quad_gauss}, this choice guarantee that contribution 
$$
\int_0^{2^{-M}} t^{2\beta-1} f^1_\lambda(t) dt  \quad \text{and} \quad \int_0^{2^{-M}} t^{1-2\beta} f^2_\lambda(t) dt
$$ 
present in \eqref{e:quad_error_1} and \eqref{e:quad_error_2} decays optimally. 
More precisely, we show that the quadrature error
$$
e(\lambda) := e_1(\lambda)+e_2(\lambda)
$$
decays like $N^{-2r}$ (independently of $\lambda \geq \lambda_0$) when using $rM$ quadrature points per integral as above.

\begin{lemma}[Quadrature Error]\label{l:quad_gauss}
There exists a constant $C_G$ only depending on $r$ such that the quadrature error $e(\lambda)$ using $ \left\lceil  \frac{r^2 N \log_2 N}{\min(\beta,1-\beta)} \right\rceil$ quadrature points satisfies
$$
e(\lambda) \leq C_G \left(\frac{1}{2\beta}+\frac{1}{2(1-\beta)\lambda} \right) N^{-2r}.
$$
\end{lemma}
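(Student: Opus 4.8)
The plan is to estimate separately the four contributions making up $e(\lambda)=e_1(\lambda)+e_2(\lambda)$: the two aggregated local Gaussian errors $\sum_{i=1}^M\sum_{j=1}^N e^1_{i,j}$ and $\sum_{i=1}^M\sum_{j=1}^N e^2_{i,j}$, together with the two tail integrals over $[0,2^{-M}]$ appearing in \eqref{e:quad_error_1} and \eqref{e:quad_error_2}. I would dispose of the tails first, as they are elementary: since $f^1_\lambda\le 1$ and $f^2_\lambda\le\lambda^{-1}$, one has $\int_0^{2^{-M}}t^{2\beta-1}f^1_\lambda(t)\,dt\le 2^{-2\beta M}/(2\beta)$ and $\int_0^{2^{-M}}t^{1-2\beta}f^2_\lambda(t)\,dt\le 2^{-2(1-\beta)M}/(2(1-\beta)\lambda)$. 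The role of the choice \eqref{d:M} (or the sharper \eqref{d:M1}) is precisely to force $2\beta M\ge 2r\log_2 N$ and $2(1-\beta)M\ge 2r\log_2 N$, so that $2^{-2\beta M}\le N^{-2r}$ and $2^{-2(1-\beta)M}\le N^{-2r}$; each tail is then bounded by $N^{-2r}$ times the corresponding prefactor in the statement.

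The substantial work lies in the local errors, which I would treat through the Gaussian remainder \eqref{e:error_gauss}. First, because the nodes $t_{i,j}^{1,k}$ lie inside $I_{i,j}$, the monic factor obeys $(q^1_{i,j})^2(t)\le |I_{i,j}|^{2r}=(2^iN)^{-2r}$ on $I_{i,j}$, using \eqref{e:length}. Second, I would control the $2r$-th derivative of $f^1_\lambda$ by rescaling: writing $f^1_\lambda(t)=g(\sqrt\lambda\,t)$ with $g(s)=(1+s^2)^{-1}$ gives $\tfrac{d^{2r}}{dt^{2r}}f^1_\lambda(\xi)=\lambda^r g^{(2r)}(\sqrt\lambda\,\xi)$, and the analyticity of $g$ (poles only at $\pm i$) yields, via partial fractions or a Cauchy estimate, a bound of the form $|g^{(2r)}(s)|\le (2r)!\,C(r)\,(1+s^2)^{-(2r+1)/2}$. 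Hence $\bigl|\tfrac{d^{2r}}{dt^{2r}}f^1_\lambda(\xi_{i,j})\bigr|\le (2r)!\,C(r)\,\lambda^r(1+\lambda\xi_{i,j}^2)^{-(2r+1)/2}$, and since $\xi_{i,j}\ge 2^{-i}$ the last factor is $\le (1+\lambda 4^{-i})^{-(2r+1)/2}$ uniformly in $j$.

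Substituting these two bounds into \eqref{e:error_gauss} cancels the $(2r)!$, and summing over $j$ replaces $\sum_j\int_{I_{i,j}}t^{2\beta-1}\,dt$ by $\int_{I_i}t^{2\beta-1}\,dt=2^{-2\beta i}(2^{2\beta}-1)/(2\beta)$. The decisive algebraic step is then the elementary inequality $a^r(1+a)^{-(2r+1)/2}\le 1$ applied with $a_i:=\lambda 4^{-i}$, since $\lambda^r 4^{-ir}(1+\lambda 4^{-i})^{-(2r+1)/2}=a_i^r(1+a_i)^{-(2r+1)/2}\le 1$; this strips away \emph{all} $\lambda$-dependence and leaves the convergent geometric series $\sum_{i\ge 1}2^{-2\beta i}=(2^{2\beta}-1)^{-1}$, whose sum cancels the factor $2^{2\beta}-1$ and produces $\sum_{i,j}|e^1_{i,j}|\le C(r)\,N^{-2r}/(2\beta)$. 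The estimate for $\sum_{i,j}|e^2_{i,j}|$ is the mirror image through \eqref{e:error_gauss2}: writing $f^2_\lambda(t)=\lambda^{-1}g(t/\sqrt\lambda)$ gives $\tfrac{d^{2r}}{dt^{2r}}f^2_\lambda(\xi)=\lambda^{-1-r}g^{(2r)}(\xi/\sqrt\lambda)$, and with $b_i:=4^{-i}/\lambda$ the same manipulation produces $\sum_{i,j}|e^2_{i,j}|\le C(r)\,N^{-2r}/(2(1-\beta)\lambda)$. Collecting the four pieces gives the stated bound with $C_G$ depending only on $r$, and the total number of nodes is $r$ per subinterval times $N$ subintervals times $M$ dyadic levels.

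The hard part will be establishing the derivative bound in precisely the form $\lambda^r(1+\lambda\xi^2)^{-(2r+1)/2}$ (and its $I_2$ analogue): it is this exact exponent balance that lets the factor $\lambda^r 4^{-ir}$ coming from the rescaling and the interval length pair with $(1+\lambda 4^{-i})^{-(2r+1)/2}$ to become simultaneously $\lambda$-free and uniformly bounded, while leaving a geometric series in $i$ that is summable and cancels the weight prefactor. A cruder derivative estimate would either reintroduce $\lambda$ through the factor $\lambda^r$ or destroy summability across the dyadic levels, so the analyticity-based bound on $g^{(2r)}$, obtained uniformly in the real argument, is the crux of the argument.
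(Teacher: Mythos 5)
Your proof is correct and follows essentially the same route as the paper: the same splitting into local Gaussian remainders plus the two tail integrals over $[0,2^{-M}]$, the same bound $(q_{i,j}^\ell)^2\le|I_{i,j}|^{2r}$ from \eqref{e:length}, the same use of the choice \eqref{d:M} of $M$ to make the tails $O(N^{-2r})$, and the same pairing of the dyadic scale with the decay of the $2r$-th derivative to strip out the $\lambda$-dependence before summing the weight integrals to $1/(2\beta)$ and $1/(2(1-\beta))$. The only divergence is the technical device for the derivative bound: where you rescale to $g(s)=(1+s^2)^{-1}$ and use the partial-fraction estimate $|g^{(2r)}(s)|\le(2r)!\,(1+s^2)^{-(2r+1)/2}$ together with $a^r(1+a)^{-(2r+1)/2}\le 1$, the paper writes the derivative explicitly as $\sum_{r\le k\le 2r}c_k\,\lambda^k t^{2k-2r}(1+t^2\lambda)^{-1-k}$ and bounds each term by $|c_k|$ after multiplying by $\xi_{i,j}^{2r}$ --- both yield the same $\lambda$-free constant depending only on $r$.
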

\begin{proof}
We start with the quadrature approximation of $I_1(\lambda)$ and focus on the subinterval $I_{i,j}$ for some $1\leq i \leq M$ and $1\leq j \leq N$.
Owing to the error representation formula \eqref{e:error_gauss} and the relation \eqref{e:length} providing the length of $I_{i,j}$ , we directly obtain that
$$
e_{i,j}^1(\lambda) \leq \frac{1}{(2r)!} \left| \frac{d^{2r}}{dt^{2r}} f_{\lambda}^1(\xi_{i,j})\right|  \left( \frac 1 {2^{i}}\right)^{2r} N^{-{2r}} \int_{I_{i,j}} t^{2\beta-1} dt.
$$ 
We compute 
$$
\frac{d^{2r}}{dt^{2r}} f_{\lambda}^1(t) = \sum_{r\leq k \leq 2r }  c_{k} \frac{\lambda^k t^{2k-2r}}{(1+t^2\lambda)^{1+k}},
$$
where $c_{k}$ are absolute constants.
Therefore, $\xi_{i,j} \in I_{i,j}$ implies
$$
\frac{d^{2r}}{dt^{2r}} f_{\lambda}^1(\xi_{i,j})\left( \frac 1 {2^{i}}\right)^{2r}  \leq 2^{2r} \frac{d^{2r}}{dt^{2r}} f_{\lambda}^1(\xi_{i,j}) \xi_{i,j}^{2r}
\leq 2^{2r} \sum_{r\leq k \leq 2r } |c_{k}| \frac{\lambda^k \xi_{i,j}^{2k}}{(1+\xi_{i,j}^2\lambda)^{1+k}} \leq 2^{2r}  \sum_{r\leq k \leq 2r } |c_{k}|. 
$$
Hence, setting $C=\frac{2^{2r}}{(2r)!}\sum_{r\leq k \leq 2r } |c_{k}|$, 
$$
e_{i,j}^1(\lambda) \leq C N^{-2r} \int_{I_{i,j}} t^{2\beta-1}
$$
so that
$$
\sum_{i=1}^M\sum_{j=1}^Ne_{i,j}^1(\lambda) \leq C N^{-2r} \int_{2^{-M}}^1 t^{2\beta-1} \leq \frac{C}{2\beta} N^{-2r}.
$$
In view of \eqref{e:quad_error_1}, it remains to bound $\int_0^{2^{-M}} t^{2\beta-1} f^1_\lambda(t)dt$.
Notice that $f_\lambda^1(t) \leq 1$ yields
$$
\int_0^{2^{-M}} t^{2\beta-1} f^1_\lambda(t)dt \leq \frac{1}{2\beta}2^{-2\beta M} \leq \frac{1}{2\beta} N^{-2r},
$$
where we used the definition \eqref{d:M} of $M$ for the last inequality.
The estimate for $e^1(\lambda)$ follows upon gathering the last two estimates.

We now discuss the approximation of $I_2(\lambda)$.
In this case, notice that there holds
$$
\frac{d^{2r}}{dt^{2r}} f_{\lambda}^2(t) = \sum_{r\leq k \leq 2r }  \tilde c_{k} \frac{t^{2k-2r}}{(t^2+\lambda)^{1+k}},
$$
where $\tilde c_{k}$ are absolute constants.
As a consequence,
$$
\frac{d^{2r}}{dt^{2r}} f_{\lambda}^2(\xi_{i,j}) \xi_{i,j}^{2r} \leq \sum_{r\leq k \leq 2r } \frac{1}{\lambda} |\tilde c_{k}|.
$$
and we directly obtain
$$
\sum_{i=1}^M\sum_{j=1}^Ne_{i,j}^1(\lambda) \leq \frac{\tilde C}{2(1-\beta)\lambda} N^{-2r} 
$$
where 
$\tilde C =\frac{2^{2r}}{(2r)!} \sum_{r\leq k \leq 2r }  |\tilde c_{k}|$.
The contribution from the first interval follows noticing that $f_\lambda^2(t) \leq \frac{1}{\lambda}$ and as in the first case, 
$$
\int_0^{2^{-M}} t^{1-2\beta} f_\lambda^2(t) dt \leq \frac{1}{\lambda}\frac{2^{2(1-\beta)M}}{2(1-\beta)} \leq \frac 1 \lambda \frac{N^{-r}}{2(1-\beta)}.
$$
This ends the estimate for $e^2(\lambda)$ and the proof.
\end{proof}

To illustrate numerically the performance of our quadrature scheme. 
We proceed as in Section \ref{ss:rectangle} and compare 
$$
\|e(\lambda) \|_{L_\infty(10,\infty)}
$$
with 
$$
NSYS:=\hbox{ number of system solves.}
$$
The number of geometric intervals for each of the two integrals was
determined by \eqref{d:M1}.
Table \ref{t:gauss} reports the values of $\|e(\lambda) \|_{L_\infty(10,\infty)}$ when using $2$ Gaussian points (i.e. $r=2$) for different values of $N$.
Except for the case of $N=2$ and $\beta=.75$, the errors for the 
geometric scheme were smaller than those of the rectangle quadrature rule.

\begin{table}[h!]
\begin{tabular}{|c|c|c|c|} \hline
$N$ & $\beta=.5(NSYS)$&$\beta=.75(NSYS)$
&$\beta=.25(NSYS)$\\ 
\hline
$2     $&$ 1.37\times 10^{-3}(72) $&$	8.37\times
10^{-4}(92)$&$2.55\times 10^{-3}(92)$ \\ \hline
$4	  $&$ 8.58\times 10^{-5}(208) $&$	4.16\times
10^{-5}({272})$&
$1.58\times 10^{-4}(272)$\\ \hline	
$8 	  $&$ 5.36\times 10^{-6}(544) $&$	4.22\times
10^{-6}(704) $&
$1.00\times 10^{-5}(704)$\\ \hline	
$16 	  $&$ 3.35\times 10^{-7}(1344) $&$	2.05\times
10^{-7}(1760)$&
$	6.22\times 10^{-7}(1760)$\\ \hline
\end{tabular}
\caption{Two points Gaussian quadrature based on a dyadic partition. The observed order of convergence is $N^{-4}$ as predicted by Lemma \ref{l:quad_gauss}.
} \label{t:gauss}
\end{table}

\subsection{An exponentially convergent quadrature.}
\label{ss:exp}

We next introduce a quadrature scheme which provides exponential
convergence to \eqref{transformed}, specifically,  
\beq
\cQ^\beta = \frac{2k\sin(\pi \beta)}{\pi}      \sum_{\ell=-N}^N e^{2\beta y_\ell} (I+e^{2y_\ell}  L)^{-1} 
\label{inforder}
\eeq
with $y_\ell=\ell k$ and $k=1/\sqrt N $.

Let $\cQ(\lambda)$ for $\lambda\ge \lambda_0>0$ be the expression on the right hand side of \eqref{inforder} with $L$ replaced by $\lambda$. 
We apply the classical analysis for these types of quadrature approximations given in \cite{lundbowers}, specifically,
Theorem 2.20 of \cite{lundbowers}.  Estimates for $\|L^{-\beta}-
\cQ^\beta\|$ will follow from estimates for
$|\lambda^{-\beta}-\cQ^\beta(\lambda)|$
which are  uniform for $\lambda\ge \lambda_0$.

We first note that for $\eta\in \mathbb R$, $|\eta|\le\pi/4$ and $\lambda\ge \lambda_0$,
\beq
|(1+e^{2(y+i\eta)} \lambda)^{-1} |\le \left \{ \bal
  1&:\qquad \hbox{ for } y\le 0,\\
e^{-2y}\lambda_0^{-1}&:\qquad\hbox{ for } y>0.
\eal \right . 
\label{dbinf}
\eeq
It easily follows that $g_\lambda(z):=e^{2\beta z} (1+e^{2z}
\lambda)^{-1}$ is analytic in the strip $D_S=\{ z\in \mathbb C,
|\Im(z)|<\pi/4\}$ for every $\lambda>0$.
In addition, \eqref{dbinf} implies that for $|\eta|\le\pi/4$ and $\lambda\ge \lambda_0$,
 \beq
|g_\lambda(y+i\eta)| 
\le \left \{ \bal
  e^{2\beta y}&:\qquad \hbox{ for } y\le 0,\\
e^{-(2-2\beta)y}\lambda_0^{-1}&:\qquad\hbox{ for } y>0.
\eal \right . 
\label{db1inf}
\eeq
This implies that 
\beq\bal
N(D_S)&:= \max_{\lambda\ge \lambda_0} \bigg\{
\int _{-\infty}^\infty( |g_\lambda(y-i\pi/4) | +  |g_\lambda(y+i\pi/4)
|) \, dy\bigg\}  \\& \le \beta^{-1} + ((1-\beta)\lambda_0)^{-1}.\eal
\label{nbinf}
\eeq
The above considerations readily imply that $g_\lambda$ is in $B(D_S)$ 
for $\lambda\ge \lambda_0$, where $B(D_S)$ is the set of functions analytic in $D_S$ that satisfy
$$
\int_{-\pi/4}^{\pi/4} | g_\lambda(t+iy)|dy = \mathcal O(|t|^\alpha), \quad t\to \pm \infty, \quad 0\leq a <1,
$$ 
(see Definition~2.12 of \cite{lundbowers}).    
We can  apply Theorem 2.20 of \cite{lundbowers}  to conclude  
that for $k>0$,
\beq
  \bigg| 
\int_{-\infty}^\infty g_\lambda(y)   \ dy-
k\sum_{\ell=-\infty}^\infty g_\lambda(\ell k) 
\bigg|
 \le
\frac{N(D_S)}{2\sinh(\pi^2/(4k))} e^{-\pi^2/(4k)}.
\label{220res}
\eeq

The following lemma is an immediate consequence of \eqref{220res}
and the obvious estimates,
\begin{align*}
&k  \sum_{\ell=-N-1}^{-\infty} |g_\lambda(\ell k) | 
\le \frac 1 { 2\beta} e^{-2\beta/k},  \\
&k  \sum_{\ell=N+1}^\infty |g_\lambda(\ell k)|
\le\frac 1{(2-2\beta)\lambda_0}
e^{-(2-2\beta)/k}.
\end{align*}

\begin{lemma}[Quadrature Error]   \label{l:exp}
Let $\cQ^\beta(\lambda)$ be defined by 
\eqref{inforder} with $L$ replaced by $\lambda\ge \lambda_0$,
$N>0$ and $k=1/\sqrt{N}$.
Then
$$\bal
 |\lambda^{-\beta} - \cQ^{\beta} (\lambda)|&\le 
\frac{2\sin(\pi \beta)}{\pi}\bigg[
\frac{ N(D_S)}{2\sinh(\pi^2/(4k))} e^{-\pi^2/(4k)}\\
&\qquad +\frac 1 { 2\beta} e^{-2\beta/k} +\frac 1{(2-2\beta)\lambda_0}
e^{-(2-2\beta)/k}\bigg].\eal
$$
\end{lemma}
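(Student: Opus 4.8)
The plan is to recognize the statement as a direct triangle-inequality assembly of facts already established above, so that no new analytic machinery is required. First I would record the exact scalar identity: specializing \eqref{transformed} to a scalar $\lambda$ (equivalently substituting $y=\ln t$ into the scalar form of \eqref{e:tbeta_intro}) gives
$$
\lambda^{-\beta}=\frac{2\sin(\pi\beta)}{\pi}\int_{-\infty}^\infty g_\lambda(y)\,dy,
$$
where $g_\lambda(z)=e^{2\beta z}(1+e^{2z}\lambda)^{-1}$ is the function already analysed above. At the same time, \eqref{inforder} with $L$ replaced by $\lambda$ reads $\cQ^\beta(\lambda)=\tfrac{2\sin(\pi\beta)}{\pi}\,k\sum_{\ell=-N}^N g_\lambda(\ell k)$. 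Thus the quantity to be bounded is exactly $\tfrac{2\sin(\pi\beta)}{\pi}$ times the difference between the integral of $g_\lambda$ and its \emph{truncated} sinc sum, and the whole argument amounts to estimating that difference.

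Next I would insert the full bilateral sinc sum $k\sum_{\ell=-\infty}^\infty g_\lambda(\ell k)$ as an intermediary and split the error by the triangle inequality into two pieces: the genuine discretization error of the sinc rule over all integers, plus the truncation error from discarding the terms with $|\ell|>N$. The first piece is controlled verbatim by \eqref{220res}, giving the bound $N(D_S)/(2\sinh(\pi^2/(4k)))\,e^{-\pi^2/(4k)}$. The hypotheses needed to invoke Theorem~2.20 of \cite{lundbowers} — analyticity of $g_\lambda$ in the strip $D_S$, its membership in $B(D_S)$, and the finiteness of $N(D_S)$ uniformly in $\lambda\ge\lambda_0$ — were all verified above in \eqref{dbinf}--\eqref{nbinf}, so this step is a direct citation.

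For the truncation piece I would bound the two tails separately using the decay estimate \eqref{db1inf}: for $\ell k>0$ the factor $e^{-(2-2\beta)\ell k}\lambda_0^{-1}$ summed from $\ell=N+1$ yields $\tfrac{1}{(2-2\beta)\lambda_0}e^{-(2-2\beta)/k}$, and for $\ell k<0$ the factor $e^{2\beta\ell k}$ summed from $\ell=-\infty$ to $-N-1$ yields $\tfrac{1}{2\beta}e^{-2\beta/k}$; these are exactly the two ``obvious estimates'' displayed immediately before the lemma, so I would simply quote them. Adding the three contributions and multiplying through by the prefactor $\tfrac{2\sin(\pi\beta)}{\pi}$ produces the claimed bound.

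Because every analytic ingredient is already in place, there is no real obstacle here; the only points requiring care are bookkeeping — matching the summation ranges of the two tails with the stated bounds and keeping the constant $\tfrac{2\sin(\pi\beta)}{\pi}$ factored out throughout. If one wanted to be self-contained, the single routine computation worth spelling out is the integral comparison for the tails: since $e^{-cy}$ is decreasing, $k\sum_{\ell=N+1}^\infty e^{-c\ell k}\le\int_{Nk}^\infty e^{-cy}\,dy=\tfrac{1}{c}e^{-cNk}=\tfrac{1}{c}e^{-c/k}$, where $Nk=1/k=\sqrt N$ because $k=1/\sqrt N$; taking $c=2-2\beta$ (and $c=2\beta$ for the mirror tail) reproduces the two displayed geometric bounds.
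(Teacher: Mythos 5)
Your proposal is correct and follows exactly the paper's argument: the lemma is obtained by inserting the full bilateral sinc sum, invoking \eqref{220res} for the discretization error, and bounding the two truncation tails by the geometric/integral comparison based on \eqref{db1inf} (the paper simply records these as ``obvious estimates'' immediately before the lemma). Your explicit tail computation $k\sum_{\ell=N+1}^{\infty}e^{-c\ell k}\le \frac{1}{c}e^{-cNk}=\frac{1}{c}e^{-c/k}$ with $Nk=1/k$ is precisely the bookkeeping the paper leaves implicit.
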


\begin{remark}[Exponential Decay] \label{rem:MN} The error from the three exponentials above 
can   essentially be equalized by setting 
$$\cQ^{\beta} (\lambda)= \frac{2k\sin(\pi \beta)}{\pi} 
 \sum_{\ell=-M}^N g_\lambda(\ell k)$$
with
$$\pi^2/(2k)\approx 2\beta k M\approx (2-2\beta) kN.$$
Thus, given $k>0$, we set 
$$ M=\bigg\lceil \frac {\pi^2}{4\beta k^2}\bigg\rceil \quad \hbox{ and }\quad
N=\bigg \lceil \frac {\pi^2}{4(1-\beta) k^2}\bigg \rceil  
$$
and get the estimate
$$\bal |\lambda^{-\beta} - \cQ^{\beta} (\lambda)|&\le 
\frac{2\sin(\pi \beta)}{\pi}\bigg[ 
\frac 1 { 2\beta}  +\frac 1{(2-2\beta)\lambda_0}
\bigg] \bigg[ \frac {e^{-\pi^2/(4k)}}{\sinh(\pi^2/(4k))}+
e^{-\pi^2/(2k)} \bigg].
\eal
$$
We note that the right hand side above asymptotically behaves like 
$$\frac{2\sin(\pi \beta)}{\pi} \bigg( \frac 1 \beta +\frac 1
{(1-\beta)\lambda_0} \bigg) e^{-\pi^2/(2k)}$$
as $k\rightarrow 0$. 
\end{remark}

The convergence results for the exponential scheme are illustrated 
in Table~\ref{t:inforder}.  As usual, we report the total number of 
systems needed to be solved ($NSYS$).   This scheme produces  
errors (using a comparable number of iterations) which are smaller than
those of the other two schemes. We use the equalized 
exponential scheme of the previous remark.  

\begin{table}[h!]
\begin{tabular}{|c|c|c|c|} \hline
$k$ & $\beta=.5(NSYS)$&$\beta=.75(NSYS)$
&$\beta=.25(NSYS)$\\ 
\hline
$1    $&$ 2.71\times 10^{-3}(11) $&$	7.62\times
10^{-4}(15)$&$4.77\times 10^{-3}(15)$ \\ \hline
$1/2	  $&$ 2.45\times 10^{-5}(41) $&$	9.15\times
10^{-6}({55})$&
$3.65\times 10^{-5}(55)$\\ \hline
$1/3	  $&$ 1.80\times 10^{-7}(91) $&$	1.01\times
10^{-7}(120) $&
$3.06\times 10^{-7}(120)$\\ \hline	
$1/4 	  $&$ 1.63\times 10^{-9}(159) $&$	8.01\times
10^{-10}(212) $&
$2.29\times 10^{-9}(212)$\\ \hline	
\end{tabular}
\caption{Error for the exponential scheme with $N$ and $M$ as in 
Remark~\ref{rem:MN}.}
\label{t:inforder}
\end{table}

\subsection{Operator approximation, the finite dimensional case.}
\label{op} 

We use the quadrature approximations of the previous section to develop
approximations to the integrals of Theorem~\ref{l:1} in the finite
dimensional case.  Let $H$ be a finite dimensional space of dimension
$K$ and set $H^1=H^0=H$. We assume
that we have two inner products, $\langle \cdot,\cdot \rangle_i$,
$i=1,2$, defined on $H$ satisfying \eqref{e:poin}.

The quadrature described in Sections \ref{ss:rectangle}, \ref{ss:gauss}
and \ref{ss:exp}  yield three different approximations of the $T^\beta$:
\begin{align}
\cQ_R^\beta &:= \frac 1 {2 N C_\beta} 
\sum_{j=1}^N  [ \beta^{-1} T_1\big( t_{1,j}^{N,*}\big)+ (1-\beta)^{-1} 
T_2\big( t_{2,j}^{N,*}\big) ],
\label{e:finitap}\\
\cQ_G^\beta &:= \frac 1 {C_\beta} 
\sum_{i=1}^M\sum_{j=1}^N \sum_{k=1}^r \left( A_{i,j}^{1,k}  T_1\big( t_{i,j}^{1,k}\big)+  
A_{i,j}^{2,k}T_2\big( t_{i,j}^{2,k}\big)  \right),\quad \hbox{and}
\label{e:finitap_gauss}\\
\cQ_E^\beta &:= \frac k {C_\beta} 
\sum_{i=-N}^N e^{2\beta y_\ell} T_1(e^{2y_\ell}), \qquad y_\ell=\ell k,
\qquad k=1/\sqrt N.
\label{e:finitap_exp}
\end{align}

We use Lemmas~\ref{l:qerror}, \ref{l:quad_gauss} and \ref{l:exp}
to obtain:

\begin{theorem} \label{t:fe1} Let $H^1=H^0=H$ with $H$ finite dimensional 
and  $\cQ_R^\beta$, $\cQ_G^\beta$ and $\cQ_E^\beta$  be defined by
  \eqref{e:finitap}, \eqref{e:finitap_gauss} and \eqref{e:finitap_exp},
 respectively. Then 
$$\bal
\|T^\beta-\cQ_R^\beta\|_0&\le B_R:=
\frac {  2+\rho((2\beta)^{-1}) \pi} {2C_\beta N \beta} +
\frac {  \mu(2+\rho((2-2\beta)^{-1})\pi)} {2C_\beta N 
(1-\beta)}, \\
\|T^\beta-\cQ_G^\beta\|_0&\le B_G:=
\frac{C_G}{C_\beta} \left( \frac{1}{2\beta}+\frac{\mu}{2(1-\beta)}
\right) N^{-2r},
\eal
$$
and
$$\bal
\|T^\beta-\cQ_E^\beta\|_0\le  B_E:&=
\frac 1 {C_\beta}
\bigg[
\frac{ N(D_S)}{2\sinh(\pi^2/(4k))} e^{-\pi^2/(4k)}\\
&\qquad +\frac 1 { 2\beta} e^{-2\beta/k} +\frac \mu{(2-2\beta)}
e^{-(2-2\beta)/k}\bigg].\eal
$$
Here $\mu$ denotes the largest eigenvalue  of $T$, $C_G$ denotes the constant
only depending on $r$ appearing in Lemma \ref{l:quad_gauss} and
$k=1/\sqrt N$. 
\end{theorem}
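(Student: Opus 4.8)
The plan is to reduce all three operator-norm estimates to the scalar quadrature bounds of Lemmas~\ref{l:qerror}, \ref{l:quad_gauss} and \ref{l:exp} by a simultaneous diagonalization. Since $H$ is finite dimensional, Section~\ref{Irep} furnishes an $H^0$-orthonormal basis $\{\tpsi_i\}_{i=1}^K$ of eigenvectors of $T$ with positive eigenvalues $\mu_i$; as $L=T^{-1}$, each $\tpsi_i$ is an eigenvector of $L$ with eigenvalue $\lambda_i:=\mu_i^{-1}$. Because $T_1(t)=(I+t^2L)^{-1}$ and $T_2(t)=t^{-2}T_1(t^{-1})$ are rational functions of $L$, they share this eigenbasis, with $T_1(t)\tpsi_i=f^1_{\lambda_i}(t)\tpsi_i$ and $T_2(t)\tpsi_i=f^2_{\lambda_i}(t)\tpsi_i$. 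Hence each of $\cQ_R^\beta$, $\cQ_G^\beta$, $\cQ_E^\beta$, being a finite linear combination of evaluations of $T_1$ and $T_2$, is also diagonal in $\{\tpsi_i\}$.

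First I would identify the diagonal entries. Comparing \eqref{e:finitap} with the scalar rectangle rule \eqref{e:quad}--\eqref{e:quad2} gives $\cQ_R^\beta\tpsi_i=C_\beta^{-1}[I_1^N(\lambda_i)+I_2^N(\lambda_i)]\tpsi_i$, and the Gaussian operator \eqref{e:finitap_gauss} likewise acts by $C_\beta^{-1}$ times the scalar Gaussian approximation of $I(\lambda_i)$ from Section~\ref{ss:gauss}; the exponential operator \eqref{e:finitap_exp} already carries its normalization, so $\cQ_E^\beta\tpsi_i=\cQ^\beta(\lambda_i)\tpsi_i$ with $\cQ^\beta(\lambda)$ as in \eqref{inforder}. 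On the other hand $T^\beta\tpsi_i=\mu_i^\beta\tpsi_i=\lambda_i^{-\beta}\tpsi_i$, and the scalar identity behind \eqref{e:tbeta}, namely $C_\beta^{-1}I(\lambda)=\lambda^{-\beta}$ (which follows from \eqref{e:cbeta} and the substitution used in the proof of Theorem~\ref{l:1}), gives $\lambda_i^{-\beta}=C_\beta^{-1}I(\lambda_i)$. Hence the rectangle and Gaussian differences act on $\tpsi_i$ as multiplication by $C_\beta^{-1}e(\lambda_i)$, where $e$ is the scalar error bounded in Lemma~\ref{l:qerror} and Lemma~\ref{l:quad_gauss} respectively, while the exponential difference acts by $\lambda_i^{-\beta}-\cQ^\beta(\lambda_i)$, the quantity bounded in Lemma~\ref{l:exp}. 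Since $\{\tpsi_i\}$ is $H^0$-orthonormal with real diagonal entries, each of these differences is self-adjoint on $H^0$ and its norm equals the maximum modulus of its diagonal entries, so
\[
\|T^\beta-\cQ_R^\beta\|_0=C_\beta^{-1}\max_{1\le i\le K}|e(\lambda_i)|,
\]
and analogously for the other two schemes.

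It then remains to bound these maxima uniformly in the spectrum. Because $\mu$ is the largest eigenvalue of $T$, the eigenvalues of $L$ obey $\lambda_i\ge\mu^{-1}$, so every $\lambda^{-1}$ appearing in Lemmas~\ref{l:qerror} and \ref{l:quad_gauss} is $\le\mu$; inserting $A=2\beta$, $B=2-2\beta$ into Lemma~\ref{l:qerror} and dividing by $C_\beta$ produces exactly $B_R$, and the same replacement in Lemma~\ref{l:quad_gauss} yields $B_G$. For the exponential scheme I would apply Lemma~\ref{l:exp} with $\lambda_0=\mu^{-1}$, so that $\lambda_0^{-1}=\mu$ converts the third exponential term into $\frac{\mu}{2-2\beta}e^{-(2-2\beta)/k}$; recalling that $2\sin(\pi\beta)/\pi=C_\beta^{-1}$ then reproduces $B_E$ verbatim. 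The genuine content lies entirely in the reduction: once the simultaneous diagonalization and the scalar identity $\lambda^{-\beta}=C_\beta^{-1}I(\lambda)$ are in place the estimates are immediate, so the only (mild) obstacle I anticipate is verifying carefully that the assembled operators \eqref{e:finitap}--\eqref{e:finitap_exp} reproduce the scalar quadratures coefficient by coefficient and that passage to the supremum over $\lambda_i\ge\mu^{-1}$ is justified.
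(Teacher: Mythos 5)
Your proposal is correct and follows essentially the same route as the paper: expand in the $H^0$-orthonormal eigenbasis $\{\tpsi_i\}$ of $T$, observe that each quadrature operator is diagonal there with diagonal entries given by the scalar quadratures at $\lambda_i=\mu_i^{-1}$, and invoke Lemmas~\ref{l:qerror}, \ref{l:quad_gauss} and \ref{l:exp} with $\lambda_0=\mu^{-1}$ so that $\lambda^{-1}\le\mu$ uniformly over the spectrum. The paper's proof is a condensed version of exactly this argument.
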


\begin{proof}  We let $\cQ^\beta$ denote either $\cQ_R^\beta$,
  $\cQ_G^\beta$ or $\cQ_E^\beta$.
As in Section~\ref{Irep}, we use the $\hzip \cdot \cdot$ orthogonal 
basis of eigenfunctions $\{\tpsi_i\}$, $i=1,\ldots,K$ and their
non-increasing eigenvalues $\{\mu_i\}$.  
Expanding $f\in H$ as
$$f=\sum_{i=1}^N \hzip f{\tpsi_i}\ \tpsi_i,$$
we find that 
$$(T^\beta-\cQ^\beta)f= C_\beta^{-1} \sum_{i=1}^N 
e(\mu_i^{-1}) 
\hzip f{\tpsi_i}\ \tpsi_i$$
where
$$e(\lambda):=\lambda^{-\beta}-\cQ^{\beta}(\lambda).$$
The theorem follows immediately from Lemmas \ref{l:qerror},
\ref{l:quad_gauss} and \ref{l:exp}.
\end{proof}

\section{Application to an elliptic boundary value problem}
\label{s:elliptic}

We restrict our considerations to $H^0=L_2(\Omega)$ and
$H^1=H^1_0(\Omega)$ with $\Omega$ a bounded polygonal domain in $\RR^d$,
$d=1,2,3$ with a
Lipschitz continuous boundary.  
We denote by $\|.\|_{L_2}$ and $\|.\|_{H^1}$ the standard $L_2$ and $H^1$ norms respectively.
In order to simplify the notations, we write $g_1(\cdot)\preceq g_2(\cdot)$
when 
$g_1(\cdot)\leq C g_2(\cdot)$ for a constant
$C$ independent of the argument in $g_1$ and $g_2$.

Let $A(\cdot,\cdot)$ denote an $H^1$ 
coercive, symmetric bilinear form on $H^1 \times H^1$, e.g.,
$$A(v,w) = \int_\Omega a(x) \nabla v\cdot \nabla w\  dx$$
with $a(x)$ satisfying $0<a_0\le a(x) \le a_1$ for positive numbers 
$a_0,a_1$.  We take 
$$<v,w>_1=A(v,w) \qquad \hbox{and} \qquad <v,w>_0 = \int_\Omega vw\,
dx.$$

Clearly, Condition (a) of Section~\ref{Irep}
holds for this pair of spaces.   
Moreover, Condition (b) is just the Poincar\'e
inequality which holds as well.

Let $H_h\subset H^1$ be the space of continuous piecewise linear finite element functions defined with respect
to a quasi uniform triangulation $\cT_h$ of $\Omega$.  By this we mean
that there exists $\rho>0$ independent of $h$  such that for any $h>0$
\beq
 \max_{T\in \mathcal T_h} \text{diam}(T) \leq 
 \rho  \min_{T\in \mathcal T_h} \text{diam}(T).
\label{e:quasi}
\eeq

For $F\in H^{-1}(\Omega)$, the dual of $H^1=H^1_0(\Omega)$, the finite
element approximation $T_h(F)$ of $T(F)$ is defined by $T_h(F)=u_h\in
H_h$ satisfying 
$$ <u_h,\phi_h>_1 = <F,\phi_h> ,\Forall \phi_h\in H_h.$$

We shall approximate first $T^\beta$ by $T_h^\beta \pi_h$ 
and subsequently apply the results of Section~\ref{op} to develop 
 an approximation to $T_h^\beta\pi_h$.  Let
$T_{1,h}$ be defined by \eqref{e:t1} with $H^1$ replaced by 
$H_h$ and $T_{2,h}(t)= t^{-2} T_{1,h}(t^{-1})$.  
Note that the definitions of $T_{i,h}(t)F $,  $i=1,2$  make 
perfect sense for $F\in H^{-1}$.  
In addition, for  $f\in H^0$, 
\beq
\bal
T_{i,h}(t) \pi_h f= T_{i,h}(t)  f,\qquad \hbox{for }i=1,2.
\label{e:pigo}
\eal
\eeq
In addition, the assumption $H_h \subset H^1$ implies that \eqref{e:b1}
and \eqref{e:b2} also hold for $T_{1,h}(t)$ and $T_{2,h}(t)$.
We define $\cQ_{R,h}^\beta$, $\cQ_{G,h}^\beta$ and $\cQ_{E,h}^\beta$
by replacing $T_1$ and  $T_2$ by $T_{1,h}$ and $T_{2,h}$ 
in \eqref{e:finitap}, \eqref{e:finitap_gauss} and \eqref{e:finitap_exp}.

The goal of this section is to estimate $\|(T^\beta-\cQ_h^\beta\pi_h)f\|_{L_2}$
for appropriate $f$.   Theorem~\ref{t:fe1} immediately leads to a bound
for $\|(T^\beta_h - \cQ_h^\beta)\pi_h\|_{L_2}$ and hence we need only 
estimate  $\|(T^\beta-T_h^\beta\pi_h)f\|_{L_2}$.  This, in turn, will require
more precise regularity estimates for the operator $T$.  To this end, 
we define intermediate spaces
for any $-1 \leq s \leq 2$,
$$
\widetilde{H}^s(\Omega) := \left\lbrace\begin{array}{ll}
H^1_0(\Omega) \cap H^s(\Omega), & \qquad 1\leq s \leq 2, \\
\lbrack L_2(\Omega),H^1_0(\Omega)\rbrack_s, & \qquad 0 \leq s \leq 1, \\
\lbrack H^{-1}(\Omega),L_2\rbrack_{1+s}, & \qquad -1 \leq s \leq 0, \\
\end{array}\right.  
$$
where $\lbrack \cdot ,\cdot \rbrack_s$ denotes the real interpolation method and
$H^{-1}(\Omega)=H^{-1}$ denotes the dual of $H^1_0(\Omega)$.

As already observed in Section~\ref{Irep}, $T:H^1\rightarrow H^{-1}$ 
is an isomorphism, i.e., $T:\widetilde{H}^{1} (\Omega)\rightarrow 
\widetilde{H}^{-1} (\Omega)$ is an isomorphism.  For $s\ge -1$, let
$$\dH^s = \{F\in H^{-1}: \sum_{i=1}^\infty \mu_i^{-s}\dual F{\tpsi_i}^2
<\infty\}
$$
with the natural norm
\begin{equation}
\label{e:norm_dot}
\|F\|_{\dH^s} = \bigg( \sum_{i=1}^\infty \mu_i^{-s}\dual F{\tpsi_i}^2
\bigg)^{1/2}.
\end{equation}
Note that the
definition of these spaces immediately implies that
$T:\dH^s\rightarrow \dH^{s+2} $ is an isometry for any real $s$.
Proposition~\ref{p:scale} below 
shows that $\dH^s$ and $\widetilde{H}^s$ coincide 
for $s\in [-1,1]$ and their norms are equivalent.  

To get the equivalence to extend for $s>1$, we require two additional conditions:
\begin{enumerate}
\item [(c)] There is an $0<\alpha\leq 1$ such that 
$T$ is a bounded map of $\widetilde{H}^{-1+\alpha} (\Omega)$
into $\widetilde{H}^{1+\alpha} (\Omega)$.    
\item [(d)] 
Let $L:H^1(\Omega)\rightarrow H^{-1}(\Omega)$ be defined by
$$\dual {Lu} \phi = A(u,\phi),\Forall \phi\in H^1_0(\Omega).$$
We assume that the restriction of $L$  is  a bounded operator from
$H^{1+\alpha}(\Omega) $ to $\widetilde H^{-1+\alpha}(\Omega)$.
\end{enumerate}
The first is an elliptic regularity
result which can be found in, e.g.,
\cite{kellogg,bramblebacuta,dauge,others}.
The second is somewhat simpler related result.   Both depend on the
smoothness of the coefficients defining $\hoip \cdot\cdot$.

We illustrate the proof of (d) in  the case of the Dirichlet form ($a(x)=1$).
For $u\in H^2(\Omega)$ and $\phi\in H^1_0(\Omega)$,
$$ |\hzip {Lu} \phi| = | A(u,\phi)| =|(-\Delta u,\phi)| \le \|u\|_{H^2(\Omega)} \|\phi\|_{L^2(\Omega)}.$$
This implies that $L$ is a bounded operator from $H^2(\Omega)$ into $L^2(\Omega)$.
  Clearly, $L$ is 
  a bounded operator from $H^1(\Omega)$ into $H^{-1}(\Omega)$.
Interpolating  these results gives 
$$\|Lu\|_{\widetilde H^{-1+s}(\Omega)} \le C\|u\|_{H^{1+s}(\Omega)},\Forall u\in H^{1+s}(\Omega),\
1\le s\le 2.$$
This estimate is a bit more tricky for more general $A(\cdot,\cdot)$, for example,
when $A(\cdot,\cdot) $ involves jumping coefficients.

The above two assumptions imply the following proposition.  We note that the case of $\alpha=1$ 
was given in \cite{thomee}.

\begin{proposition} \label{p:scale}
Assume that (a)--(b) hold.  Then the spaces $\widetilde H^s(\Omega)$ and 
  $\dH^s$ coincide for $s\in [-1,1]$ and their norms are equivalent.   If, in addition,
(c) and (d) hold, then the above equivalence extends to $s\in [1,1+\alpha]$.
\end{proposition}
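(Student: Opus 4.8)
The plan is to treat the $\dH^s$ as a Hilbert scale generated by the spectral decomposition of $T$ and to match it against the interpolation scale $\widetilde H^s$ at a handful of distinguished exponents, filling in the rest by interpolation. First I would record the orthogonality relations coming from the basis $\{\tpsi_i\}$. Since $T\tpsi_i=\mu_i\tpsi_i$, the definitions of the three inner products give $\hzip{\tpsi_i}{\tpsi_j}=\delta_{ij}$, $\hoip{\tpsi_i}{\tpsi_j}=\mu_i^{-1}\delta_{ij}$ and $\hmip{\tpsi_i}{\tpsi_j}=\mu_i\delta_{ij}$. Expanding an arbitrary $F$ in this basis and comparing with \eqref{e:norm_dot} then shows $\dH^0=H^0$, $\dH^1=H^1$ and $\dH^{-1}=H^{-1}$, in each case with identical norms; this pins the scale down at $s=-1,0,1$.

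Next I would use the standard reproduction property of a Hilbert scale: because the $\dH^s$ arise from the functional calculus of a single positive self-adjoint operator, the real interpolation method reproduces the scale, $[\dH^{s_0},\dH^{s_1}]_{\theta}=\dH^{(1-\theta)s_0+\theta s_1}$ with equivalent norms. Combining this with the three endpoint identifications and with the very definition of $\widetilde H^s$ as $[H^0,H^1]_s$ on $[0,1]$ and as $[H^{-1},H^0]_{1+s}$ on $[-1,0]$ yields $\widetilde H^s=\dH^s$ with equivalent norms for all $s\in[-1,1]$, since interpolation of spaces with equivalent norms produces interpolation spaces with equivalent norms. This settles the first assertion, which uses only (a)--(b).

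For the extension under (c)--(d) the crux is to identify the single new endpoint $\dH^{1+\alpha}=\widetilde H^{1+\alpha}=H^1_0\cap H^{1+\alpha}$; the interior exponents will then come by interpolation. Here I would use that $T:\dH^{-1+\alpha}\to\dH^{1+\alpha}$ and $L=T^{-1}:\dH^{1+\alpha}\to\dH^{-1+\alpha}$ are isometries (noted above), while $-1+\alpha\in[-1,0]$ lies in the already-settled range, so $\dH^{-1+\alpha}=\widetilde H^{-1+\alpha}$. For one inclusion, given $v\in\dH^{1+\alpha}$ set $g:=Lv\in\dH^{-1+\alpha}=\widetilde H^{-1+\alpha}$; then $v=Tg$ and (c) gives $v\in\widetilde H^{1+\alpha}$ with $\|v\|_{\widetilde H^{1+\alpha}}\preceq\|g\|_{\widetilde H^{-1+\alpha}}\preceq\|g\|_{\dH^{-1+\alpha}}=\|v\|_{\dH^{1+\alpha}}$. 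For the reverse, given $v\in\widetilde H^{1+\alpha}\subset H^1_0$, the weak operator of (d) has spectral coefficients $\dual{Lv}{\tpsi_i}=A(v,\tpsi_i)=\hoip v{\tpsi_i}=\mu_i^{-1}\hzip v{\tpsi_i}$, so the weak $Lv$ coincides with the spectral one and $\|v\|_{\dH^{1+\alpha}}=\|Lv\|_{\dH^{-1+\alpha}}\preceq\|Lv\|_{\widetilde H^{-1+\alpha}}\preceq\|v\|_{H^{1+\alpha}}$ by (d). Hence $\dH^{1+\alpha}=\widetilde H^{1+\alpha}$ with equivalent norms.

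Finally, for $s\in(1,1+\alpha)$ the Hilbert scale gives $\dH^s=[\dH^1,\dH^{1+\alpha}]_{\theta}=[H^1_0,\,H^1_0\cap H^{1+\alpha}]_{\theta}$ with $s=1+\theta\alpha$, and it remains to identify this last interpolation space with $\widetilde H^s=H^1_0\cap H^{s}$. I expect this identification of the positive-order Sobolev intersection spaces, $[H^1_0,\,H^1_0\cap H^{1+\alpha}]_{\theta}=H^1_0\cap H^{1+\theta\alpha}$, to be the main obstacle: unlike the negative-order side, where $\widetilde H^s$ is itself defined by interpolation and reiteration applies for free, here one must control the interaction between the boundary condition and the fractional smoothness, the delicate regime being $1+\theta\alpha$ near and above $3/2$. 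This is precisely where the $\alpha=1$ argument of \cite{thomee} must be upgraded to fractional order; I would either cite the corresponding Sobolev interpolation result or verify it directly, after which $\dH^s=\widetilde H^s$ follows and everything else is bookkeeping with the spectral expansion.
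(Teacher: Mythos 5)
Your proposal is correct and follows essentially the same route as the paper: endpoint identifications at $s=-1,0,1$ via the spectral basis, interpolation on $[-1,1]$, the two-sided argument combining the isometry $T:\dH^{s}\to\dH^{s+2}$ with conditions (c)--(d) at the endpoint $s=1+\alpha$, and interpolation again on $(1,1+\alpha)$. The one step you flag as the main obstacle --- identifying $[H^1_0,\,H^1_0\cap H^{1+\alpha}]_{\theta}$ with $H^1_0\cap H^{1+\theta\alpha}$ --- is resolved in the paper by citing Lemma A1 of \cite{guermondima2009}, applied with the projector $P=TL$, whose stability on $H^{1+\alpha}(\Omega)$ follows from (c) and (d).
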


\begin{proof} As discussed earlier, $\{\psi_i\}$, $i=1,2,\ldots $ is an
  orthonormal basis for $H^{-1}$ with associated eigenvalues $\{\mu_i\}$ and we recall the definition $\tpsi_i:= \mu_i^{1/2} \psi_i$.  
This implies that $F\in H^{-1}$ if and only if the 
series
$$\sum_{i=1}^\infty \hmip F{\psi_i}  \psi_i$$
converges (to $F$) in $H^{-1}$, but
$$\sum_{i=1}^\infty \hmip F{\psi_i} ^2=\sum_{i=1}^\infty \mu_i^2 \dual F{\psi_i} ^2
=\sum_{i=1}^\infty \mu_i \dual F{\tpsi_i} ^2.$$
This shows that $H^{-1}$ and $\dH^{-1}$ coincide and have identical norms.

It is clear that $\{\tpsi_i\}$ is an orthonormal set in $H^0$.  Note that if $w\in H^0$
satisfies $\hzip w{\tpsi_i}=\mu_i^{1/2} \dual w {\psi_i}=0 $ for all
$i$,  then the identified functional 
is zero 
in $H^{-1}$ and hence
$$\hzip w{\phi}=0,\Forall \phi\in H^{1}.$$
The density of $H^{1}$ in $H^0$ implies that $w=0$, i.e., $\{\tpsi_i\}$ 
is an orthonormal basis for $H^0$.  As above, it follows that $H^0$ and $\dH^0$ coincide and have 
identical norms.

The argument for  $H^1$ is essentially the same as for $H^0$ except the
$H^1$-orthonormal  basis is now 
of the form $\{\mu_i  \psi_i\}$, $i=1,2,\ldots$.  This implies that $H^1$ and $\dH^1$ coincide and have 
identical norms.   The proposition then follows for $s\in [-1,1]$ by interpolation.

We finally extend the result to $[1,1+\alpha]$ when  (c) and (d) hold.
Suppose that $u$ is in $\widetilde H^{1+\alpha}(\Omega)$.  Then
$$u=\sum_{i=1}^\infty \hmip  u{\psi_i}  \psi_i$$
where the series converges, at least, in $H^{-1}$.
Now, $TLu=u$ for $u\in \widetilde H^{1+\alpha}(\Omega) \subset H^1_0(\Omega)$ so 
$$\mu_i \dual {Lu} {\tpsi_i} =  \mu_i \hoip  {u} {\tpsi_i} =  \hoip  {u}
{T\tpsi_i} =  \dual u {\tpsi_i} .$$
According to (d) and the first part of this proof,  $Lu\in \widetilde H^{-1+\alpha}(\Omega)=\dH^{-1+\alpha}$ hence
$$\sum_{i=1}^\infty \mu_i^{1-\alpha} \dual {Lu}{\tpsi_i}^2 =\sum _{i=1}^\infty\mu_i^{-1-\alpha} <u,\tpsi_i>^2<\infty,$$
i.e., $u\in \dH^{1+\alpha}$.    Moreover, 
$$\|u\|_{\dH^{1+\alpha}} =\|Lu\|_{\dH^{-1+\alpha}} = \|Lu\|_{\widetilde H^{-1+\alpha}(\Omega)} \le C 
\|u\|_{\widetilde H^{1+\alpha}(\Omega)}.$$
Alternatively, if $u$ is in $\dH^{1+\alpha}$, then $Lu$ is in $\dH^{-1+\alpha}=\widetilde H^{-1+\alpha}(\Omega)$.  Now $u$ solves
$$A(u,\phi)= <Lu,\phi>,\Forall \phi\in H^1$$
and  hence (c) implies that $u$ is in $\widetilde H^{1+\alpha}(\Omega)$ and satisfies
$$\|u\|_{\widetilde H^{1+\alpha}(\Omega)} \le C\|Lu\|_{\widetilde
  H^{-1+\alpha}(\Omega)}=  C \|Lu\|_{\dH^{-1+\alpha}} =
C \|u\|_{\dH^{1+\alpha}}.$$
This shows that the spaces are identical at $s=1+\alpha$.   

We note that the spaces $\widetilde 
H^{1+s}(\Omega)$ for $s\in (0,\alpha)$ are the intermediate spaces on the interpolation scale between 
$\widetilde 
H^{1}(\Omega)$ and $\widetilde 
H^{1+\alpha}(\Omega)$.    This result is a consequence of Lemma A1 of
\cite{guermondima2009}  (see also Lemma A2 there).  The projector needed for
the application of Lemma A1\cite{guermondima2009}  is  $P=TL$ in our context and its 
stability on $H^{1+\alpha}(\Omega)$ is a consequence of (c) and (d).
Thus, the result for $s\in (1,1+\alpha)$ follows by interpolation and
completes the proof of the proposition.
\end{proof} 

The previous result coupled with assumption (c) and  a duality argument entails the following approximation estimate for $T-T_h$.

\begin{corollary}\label{c:apprx}
Assume (a), (b) and (c) hold.
Then there exists a constant $C$ independent of $h$ such that for all $f \in \dH^{\alpha-1}$ there holds
$$
\| (T-T_h) f \|_{\dH^{1-\alpha}} \leq C h^{2\alpha} \| f \|_{\dH^{\alpha-1}}.
$$
\end{corollary}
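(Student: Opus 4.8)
The plan is to combine a best-approximation estimate in the energy norm with an Aubin--Nitsche duality argument, working throughout on the scale $\dH^s$ and invoking Proposition~\ref{p:scale} (together with (c) and (d)) to identify $\dH^s=\widetilde H^s(\Omega)$ with equivalent norms on $[-1,1+\alpha]$. The point is that the exponent $2\alpha$ will arise as the product of two $h^\alpha$ energy-norm factors, one for the primal error $(T-T_h)f$ and one for the dual error, rather than from a single direct estimate.

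First I would establish the energy estimate
$$
\|(T-T_h)v\|_1 \preceq h^{\alpha}\,\|v\|_{\dH^{\alpha-1}},
\qquad v\in \dH^{\alpha-1}.
$$
Since $\hoip\cdot\cdot=A(\cdot,\cdot)$ and $T_hv$ is characterized by $\hoip{T_hv}{\chi}=\dual v\chi$ for $\chi\in H_h$, the element $T_hv$ is the $\hoip\cdot\cdot$-orthogonal (Ritz) projection of $Tv$ onto $H_h$, so $\|(T-T_h)v\|_1=\inf_{\chi\in H_h}\|Tv-\chi\|_1$. Assumption (c) with Proposition~\ref{p:scale} gives $Tv\in\widetilde H^{1+\alpha}(\Omega)$ and $\|Tv\|_{\widetilde H^{1+\alpha}}\preceq\|v\|_{\widetilde H^{\alpha-1}}\simeq\|v\|_{\dH^{\alpha-1}}$, while the quasi-uniformity \eqref{e:quasi} yields the finite-element approximation bound $\inf_{\chi\in H_h}\|w-\chi\|_1\preceq h^\alpha\|w\|_{\widetilde H^{1+\alpha}}$. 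Chaining these three inequalities produces the displayed estimate, which I will use for both $v=f$ and $v=g$ below.

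Next I would treat the $\dH^{1-\alpha}$ norm by duality. By \eqref{e:norm_dot} and Cauchy--Schwarz in the $\{\tpsi_i\}$-coefficients, $\dH^{\alpha-1}$ is the dual of $\dH^{1-\alpha}$ under the (extended) $\hzip\cdot\cdot$ pairing, so with $e:=(T-T_h)f\in H^1$,
$$
\|e\|_{\dH^{1-\alpha}}=\sup_{g\in\dH^{\alpha-1}}\frac{\hzip e g}{\|g\|_{\dH^{\alpha-1}}}.
$$
For fixed $g$ I introduce the dual pair $w:=Tg$, $w_h:=T_hg$. Testing $\hoip w\phi=\dual g\phi$ with $\phi=e\in H^1$, and using the Galerkin orthogonality $\hoip e{w_h}=0$ (valid since $w_h\in H_h$), I obtain
$$
\hzip e g=\dual g e=\hoip w e=\hoip{(w-w_h)}{e}=\hoip{(T-T_h)g}{(T-T_h)f}.
$$
Applying Cauchy--Schwarz in $\hoip\cdot\cdot$ and then the energy estimate to each factor gives $|\hzip e g|\preceq h^{2\alpha}\|g\|_{\dH^{\alpha-1}}\|f\|_{\dH^{\alpha-1}}$; dividing by $\|g\|_{\dH^{\alpha-1}}$ and taking the supremum over $g$ yields the asserted bound.

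I expect the main technical obstacle to be the fractional-order approximation property $\inf_{\chi\in H_h}\|w-\chi\|_1\preceq h^{\alpha}\|w\|_{\widetilde H^{1+\alpha}}$ for non-integer $\alpha\in(0,1)$. The integer case $\alpha=1$ is classical; the general case is obtained by interpolating the $\alpha=0$ and $\alpha=1$ bounds for a stable quasi-interpolant (for instance the Scott--Zhang operator or the $\hzip\cdot\cdot$-projection), but one must verify that this operator is simultaneously stable on $\widetilde H^1$ and $\widetilde H^2$ and that the identification $\widetilde H^{1+\alpha}=[\widetilde H^{1},\widetilde H^{1+1}]_{\alpha}$ supplied by Proposition~\ref{p:scale} legitimately transfers the interpolated estimate. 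Once this approximation bound and the norm equivalence $\dH^s=\widetilde H^s$ are in hand, the remaining steps---Céa's identity, the duality characterization of the $\dH^{1-\alpha}$ norm, and the Galerkin manipulation producing the symmetric error product---are routine.
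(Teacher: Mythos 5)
Your proposal is correct and follows essentially the same route as the paper: an energy-norm estimate $\|(T-T_h)f\|_{\dH^1}\preceq h^\alpha\|f\|_{\dH^{\alpha-1}}$ obtained from (c) and Proposition~\ref{p:scale}, followed by the Aubin--Nitsche duality argument, which the paper simply invokes as ``a standard duality argument (Nitsche trick)'' and which you write out in full (correctly, including the symmetric error product $\hoip{(T-T_h)g}{(T-T_h)f}$ yielding the two factors of $h^\alpha$). The only cosmetic point is that assumption (d) is not actually needed: the norm equivalence of Proposition~\ref{p:scale} is only required at the index $\alpha-1\in[-1,0]$, which follows from (a)--(b) alone, consistent with the corollary's hypotheses.
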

\begin{proof}
Owing to (c) we deduce that
$$
\| (T-T_h)f\|_{\dH^1} \preceq h^{\alpha} \| T f \|_{\dH^{1+\alpha}} \preceq h^{\alpha} \|f\|_{\dH^{\alpha-1}},
$$ 
where Proposition \ref{p:scale} is used to obtain the last estimate.
The desired estimate follows from a standard duality argument (Nitsche trick).
\end{proof}

The following equivalence relation will be instrumental  in the proof of Theorem \ref{t:tbmtbh}.
For $0\leq s \leq 1$, there exists a constant $c$ independent of $h$ such that for all  $v_h \in H_h$
\begin{equation}\label{p:discretenorm}
\frac 1 c \|v_h\|_{\dH^s_h}   \leq \| v_h \|_{\dH^s} \leq c  \|v_h\|_{\dH^s_h},
\end{equation}
where 
$$
\|v_h\|_{\dH^s_h} :=\bigg( \sum_{i=1}^M \mu_{i,h}^{-s}\dual {v_h} {\tpsi_{i,h}}^2\bigg)^{1/2},
$$
and $\mu_{i,h}$, $i=1,...,M$ are the eigenvalues of $T_h$ with
corresponding $L^2$-orthonormal eigenfunction $\tpsi_{i,h}$ (compare
\eqref{e:norm_dot} with the norm on $\dH^s$).
The above equivalence is a well-known result in the literature on
multigrid methods (see, e.g., \cite[Appendix A.2]{00BZ}).

We are now in position to derive the  following result on the space discretization error.

\begin{theorem}[Convergence of the space-discretization]\label{t:tbmtbh}
Assume that (a)--(d)  and \eqref{e:quasi} hold.  Set 
$\gamma=\alpha-\beta $ when $\alpha\ge \beta $ and $\gamma=0$ when 
$\alpha<\beta$.   For $\delta\ge \gamma$,
There exists a constant $C$ uniform in $h$ and $\delta$ such that 
$$
\| (T^{\beta  } - T_h^{\beta}\pi_h) f\|_{L_2} 
\leq C_{\delta,h}  h^{2\alpha}\|f\|_{\dH^{2\delta}}, \Forall f\in \dH^{2\delta}.
$$
Here
\beq
C_{\delta,h}=\left \{ \bal C\ln(1/h)&:\qquad \hbox{ when } \delta
=\gamma \hbox{ and } \alpha\ge
\beta,\\
C\kern 2em&:\qquad \hbox{ when } \delta>\gamma
\hbox{ and } \alpha\ge
\beta,\\
C\kern 2em&:\qquad \hbox{ when } 
\beta>\alpha.
\eal
\right.
\label{cdelta}
\eeq
\end{theorem}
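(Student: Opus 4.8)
The plan is to reduce the estimate to a $t$-weighted integral of the finite element error for the shifted problem \eqref{e:t1}, and then to integrate a sharp pointwise-in-$t$ bound. Since $H_h$ is finite dimensional, Theorem~\ref{l:1} applies with $H^1=H^0=H_h$ and yields the representation \eqref{e:tbeta} for $T_h^\beta$ as well; combined with \eqref{e:pigo} (namely $T_{1,h}(t)\pi_h=T_{1,h}(t)$ on $H^0$) this gives
\[
(T^\beta-T_h^\beta\pi_h)f=C_\beta^{-1}\int_0^\infty t^{2\beta-1}\,(T_1(t)-T_{1,h}(t))f\,dt .
\]
Thus it suffices to bound $\int_0^\infty t^{2\beta-1}\|(T_1(t)-T_{1,h}(t))f\|_{L_2}\,dt$, and the whole game becomes controlling the scalar integrand with the correct dependence on $t$ and $h$.

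The core is a pointwise finite element estimate for $w:=T_1(t)f$ and $w_h:=T_{1,h}(t)f$. Because $w_h$ is the Galerkin solution of \eqref{e:t1} in $H_h$, it is the Ritz projection $R_h^t w$ of $w$ for the $t$-dependent coercive form $a_t(u,v):=\hzip uv+t^2\hoip uv$, whose energy norm is $|||v|||^2:=\|v\|_0^2+t^2\|v\|_1^2$. C\'ea's lemma together with piecewise linear interpolation and the norm equivalence of Proposition~\ref{p:scale} (using (c),(d)) gives $|||w-w_h|||\le C h^\alpha (h^2+t^2)^{1/2}\|w\|_{\dH^{1+\alpha}}$, and an Aubin--Nitsche duality against $z_g:=T_1(t)g$ upgrades this to
\[
\|w-w_h\|_{L_2}\le |||w-w_h|||\;\sup_{\|g\|_{L_2}=1}|||z_g-R_h^t z_g||| .
\]
I then evaluate the $\dH^{1+\alpha}$ norms spectrally: since $w=\sum_i \frac{\mu_i}{\mu_i+t^2}\hzip f{\tpsi_i}\,\tpsi_i$, the norms reduce to $\sup_{0<\mu\le\mu_1}\mu^{p}/(\mu+t^2)^2$, which, using that the spectrum is bounded by the largest eigenvalue $\mu_1$, behaves like $t^{2p-4}$ for $t\lesssim1$ and like $\mu_1^{p}t^{-4}$ for $t\gtrsim1$. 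This produces, for $t\gtrsim h$,
\[
\|(T_1(t)-T_{1,h}(t))f\|_{L_2}\le C\,h^{2\alpha}(h^2+t^2)\,t^{2\delta-2-2\alpha}\,\|f\|_{\dH^{2\delta}} .
\]

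This bound degenerates as $t\to0$, reflecting that $w\to f$, which need not lie in $\dH^{1+\alpha}$; hence for $t\le h$ I use a separate saturation estimate. Writing $T_1(t)-I=-t^2 L\,T_1(t)$, a spectral computation gives $\|(T_1(t)-I)f\|_{L_2}\le C t^{2\delta}\|f\|_{\dH^{2\delta}}$, and similarly for the discrete difference $T_{1,h}(t)-\pi_h$ (via the discrete norm equivalence \eqref{p:discretenorm}); together with the projection bound $\|(I-\pi_h)f\|_{L_2}\le C h^{2\delta}\|f\|_{\dH^{2\delta}}$ this yields $\|(T_1(t)-T_{1,h}(t))f\|_{L_2}\le C h^{2\delta}\|f\|_{\dH^{2\delta}}$ uniformly for $t\le h$.

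It remains to integrate. Splitting $\int_0^\infty=\int_0^h+\int_h^1+\int_1^\infty$, the contribution of $(0,h)$ is $\lesssim h^{2\delta}\int_0^h t^{2\beta-1}dt\simeq h^{2\beta+2\delta}\le h^{2\alpha}$, where the last inequality uses $\delta\ge\gamma$ (so $\beta+\delta\ge\alpha$); the contribution of $(1,\infty)$ is $\lesssim h^{2\alpha}\int_1^\infty t^{2\beta-3}dt\lesssim h^{2\alpha}$ since $\beta<1$. On $(h,1)$ the integrand is $\lesssim h^{2\alpha}\,t^{2(\beta-\alpha)+2\delta-1}$; when $\alpha<\beta$, or when $\alpha\ge\beta$ and $\delta>\gamma$, the exponent exceeds $-1$ and the integral is $O(1)$, giving $h^{2\alpha}$, whereas in the borderline case $\alpha\ge\beta$ and $\delta=\gamma$ the exponent is exactly $-1$ and $\int_h^1 t^{-1}dt=\ln(1/h)$ produces the extra logarithm, which is precisely the dichotomy \eqref{cdelta}. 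The main obstacle is the second paragraph: obtaining the sharp $t$-weights uniformly over the (bounded) spectrum and, in tandem, isolating the borderline $\delta=\gamma$ where the $\int dt/t$ divergence is cut off at the scale $t\simeq h$ to yield $\ln(1/h)$; the small-$t$ saturation estimate is what prevents the $(0,h)$ part from contributing a spurious logarithm.
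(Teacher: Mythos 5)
Your argument is correct in substance and arrives at exactly the same pointwise-in-$t$ bound $\preceq h^{2\alpha}\min(t^{2\delta-2\alpha},t^{-2})\|f\|_{\dH^{2\delta}}$ and the same integration dichotomy, but it gets there by a genuinely different technical route. The paper never performs a C\'ea/Aubin--Nitsche analysis of the shifted form $\hzip\cdot\cdot+t^2\hoip\cdot\cdot$; instead it splits off $(I-\pi_h)T^\beta f$ and then uses the operator factorization \eqref{e:tdif}, $\pi_h(T_1(t)-T_{1,h}(t))=t^2(t^2I+T_h)^{-1}\pi_h(T-T_h)(t^2I+T)^{-1}$, so that the only finite element input is the standard unshifted estimate $\|T-T_h\|_{\dH^{\alpha-1}\to\dH^{1-\alpha}}\preceq h^{2\alpha}$ of Corollary~\ref{c:apprx}; the $t$-weights are then absorbed by the purely spectral resolvent bounds of Lemma~\ref{techlem}. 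Your route instead proves a fresh duality estimate for each $t$, which costs you a $t$-dependent regularity shift for both the primal and dual shifted problems (your spectral computation of $\|T_1(t)g\|_{\dH^{1+\alpha}}$) but buys a cleaner treatment of the endpoints: you avoid the paper's separate handling of $(I-\pi_h)T^\beta f$ and its crude cut at $t_0=h^{\alpha/\beta}$, replacing the latter by the saturation bound $\|(T_1(t)-I)f\|_{L_2}\le t^{2\delta}\|f\|_{\dH^{2\delta}}$ on $(0,h)$, and you integrate directly over $(1,\infty)$ rather than substituting $t\to t^{-1}$ to form $I_2$. Two small points you should make explicit: as in the paper, one must first reduce to $\delta<(1+\alpha)/2$ (otherwise the exponent $p=1-\alpha+2\delta$ exceeds $2$ and the sup $\mu^{p}/(\mu+t^2)^2$ no longer yields $t^{2p-4}$; the reduction is harmless since $\|\cdot\|_{\dH^{2\delta'}}\preceq\|\cdot\|_{\dH^{2\delta}}$ for $\delta'\le\delta$), and your discrete saturation estimate uses \eqref{p:discretenorm} and the $\dH^{s}$-stability of $\pi_h$ for $s=2\delta$, which the paper only records for $s\in[0,1]$, so the same reduction of $\delta$ is needed there.
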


\begin{remark}[Less regularity: $\delta < \alpha-\beta$] \label{dlessamb}
If $f\in
\dH^{2\delta}$ with $\delta < \alpha-\beta$, $T^\beta f$ is only in
$\dH^{2\beta+2\delta}$ and hence the best possible rate is
$O(h^{2\beta+2\delta})$.  This can essentially be recovered from the
  theorem.  Indeed, if (c) and (d) hold for $\alpha$ then, by interpolation,
they hold for any $\alpha_0$ in $(0,\alpha)$. Applying the theorem with 
$\alpha=\delta+\beta$ gives 
$$\| (T^{\beta  } - T_h^{\beta}\pi_h) f\|_{L_2} 
\leq C \ln(h^{-1})  h^{2\beta+2\delta}\|f\|_{\dH^{2\delta}}, 
\Forall f\in \dH^{2\delta}.
$$
\end{remark}

The identity $\|TF\|_1=\|F\|_{-1}$ combined with \eqref{e:poin} and
\eqref{e:hmb} implies that the largest eigenvalue of $T_h$ is bounded by
$c_0^2$. 
Combining this with the above theorem and Theorem~\ref{t:fe1} gives the following
corollary.

\begin{corollary} \label{c:final}
Assume that (a)--(d)  and \eqref{e:quasi} hold.
Then for $j=R,G,E$,
there exists a constant $C$ uniform in $h$ and $N$ such that for all $
f\in \dH^{2\delta}$ 
$$
\| (T^{\beta  } - \cQ_{j,h}^{\beta}\pi_h)f \|_{L_2} 
\leq C_{\delta,h} 
h^{2\alpha}  \|f\|_{\dH^{2\delta}} +B_j  \|f\|_{L_2}.
$$
Here $\delta$ and $C_{\delta,h} $ are given in the above theorem
and $B_R,B_G,B_E$ are given in Theorem~\ref{t:fe1}.
\end{corollary}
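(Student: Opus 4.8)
The plan is to obtain the estimate by a single triangle inequality that separates the finite element (space discretization) error from the quadrature error, and then to invoke the two results already proved. First I would write, for $f\in\dH^{2\delta}$,
$$
(T^{\beta}-\cQ_{j,h}^{\beta}\pi_h)f=(T^{\beta}-T_h^{\beta}\pi_h)f+(T_h^{\beta}-\cQ_{j,h}^{\beta})\pi_h f,
$$
so that
$$
\|(T^{\beta}-\cQ_{j,h}^{\beta}\pi_h)f\|_{L_2}\le\|(T^{\beta}-T_h^{\beta}\pi_h)f\|_{L_2}+\|(T_h^{\beta}-\cQ_{j,h}^{\beta})\pi_h f\|_{L_2}.
$$
The first term is controlled directly by Theorem~\ref{t:tbmtbh}, which under the standing hypotheses (a)--(d) and \eqref{e:quasi} yields $\|(T^{\beta}-T_h^{\beta}\pi_h)f\|_{L_2}\le C_{\delta,h}\,h^{2\alpha}\|f\|_{\dH^{2\delta}}$, with $C_{\delta,h}$ exactly as in \eqref{cdelta}.

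For the second term I would apply Theorem~\ref{t:fe1} in the finite dimensional setting of Section~\ref{op}, taking $H=H_h$ equipped with the two inner products $\hzip\cdot\cdot$ and $\hoip\cdot\cdot$ inherited from $L_2(\Omega)$ and $A(\cdot,\cdot)$. Since $H_h\subset H^1$, the Poincar\'e inequality \eqref{e:poin} is inherited, so assumptions (a) and (b) of Section~\ref{Irep} hold for this pair; moreover the operators $T_{1,h}(t)$, $T_{2,h}(t)$ and the resulting quadrature operators $\cQ_{R,h}^\beta$, $\cQ_{G,h}^\beta$, $\cQ_{E,h}^\beta$ are precisely the finite dimensional objects $\cQ_R^\beta$, $\cQ_G^\beta$, $\cQ_E^\beta$ of that section. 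Hence Theorem~\ref{t:fe1} gives $\|T_h^{\beta}-\cQ_{j,h}^{\beta}\|_0\le B_j$ for $j=R,G,E$. Because $\pi_h$ is the $L_2(\Omega)$-orthogonal projection, it is a norm one map, $\|\pi_h f\|_{L_2}\le\|f\|_{L_2}$, and therefore
$$
\|(T_h^{\beta}-\cQ_{j,h}^{\beta})\pi_h f\|_{L_2}\le B_j\,\|\pi_h f\|_{L_2}\le B_j\,\|f\|_{L_2}.
$$
Combining the two bounds yields the asserted estimate.

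The one point that genuinely requires attention -- and the step I would treat as the crux -- is the uniformity in $h$ of the constants $B_j$. The bounds $B_R$, $B_G$, $B_E$ supplied by Theorem~\ref{t:fe1} involve the largest eigenvalue $\mu$ of the operator playing the role of $T$, which here is $T_h$. As recorded just before the statement of the corollary, the identity $\|TF\|_1=\|F\|_{-1}$ together with \eqref{e:poin} and \eqref{e:hmb} forces the largest eigenvalue of $T_h$ to be bounded by $c_0^2$, \emph{independently of $h$}. Replacing $\mu$ by this $h$-independent bound renders each $B_j$ uniform in $h$ (retaining its explicit dependence on $N$, or on $k=1/\sqrt N$, through the quadrature rate), so the overall constant $C$ hidden in $C_{\delta,h}$ absorbs only $h$- and $N$-independent quantities. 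This delivers the corollary with the same $C_{\delta,h}$ as in Theorem~\ref{t:tbmtbh} and the quadrature terms $B_R$, $B_G$, $B_E$ of Theorem~\ref{t:fe1}.
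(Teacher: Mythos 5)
Your proof is correct and follows exactly the route the paper intends: the paper gives no separate proof but states that the corollary follows by combining Theorem~\ref{t:tbmtbh} with Theorem~\ref{t:fe1}, using the observation (recorded immediately before the corollary) that the largest eigenvalue of $T_h$ is bounded by $c_0^2$ independently of $h$. Your triangle-inequality decomposition and your attention to the $h$-uniformity of the $B_j$ match the paper's argument precisely.
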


We introduce the following lemma which is needed for the proof of the
above theorem.  Its proof is given after the proof of the theorem.

\begin{lemma} \label{techlem} 
For any $\eta\in [0,(1+\alpha)/2)$ set  $\theta=(1+\alpha)/2-\eta$.
Then
$$\|t^{2\theta} (t^2I+T)^{-1} f \|_{\dH^{\alpha-1}} \le \|f\|_{\dH^{2\eta}},\Forall
f\in \dH^{2\eta}.
$$
In addition, 
$$ \|t^{1+\alpha} (t^2I+T_h)^{-1} f \|_{L^2} \le 
  \|f\|_{\dH_h^{1-\alpha}},\Forall f\in H_h.$$
The above results hold for all $t\ge 0$.
\end{lemma}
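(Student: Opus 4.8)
The plan is to diagonalize both operators against their own eigenbases and thereby reduce each of the two claims to an elementary scalar inequality, one eigenvalue at a time. Recall from Section~\ref{Irep} that the $L^2$-orthonormal functions $\tpsi_i=\mu_i^{1/2}\psi_i$ satisfy $T\tpsi_i=\mu_i\tpsi_i$, so they are also eigenfunctions of $(t^2I+T)^{-1}$ with eigenvalues $(t^2+\mu_i)^{-1}$. Writing $f=\sum_i c_i\tpsi_i$ with $c_i=\hzip f{\tpsi_i}$, I would first record
$$t^{2\theta}(t^2I+T)^{-1}f=\sum_i \frac{t^{2\theta}}{t^2+\mu_i}\,c_i\,\tpsi_i,$$
and then, using the definition \eqref{e:norm_dot} of the $\dH^s$-norm,
$$\|t^{2\theta}(t^2I+T)^{-1}f\|_{\dH^{\alpha-1}}^2=\sum_i \mu_i^{1-\alpha}\frac{t^{4\theta}}{(t^2+\mu_i)^2}\,c_i^2, \qquad \|f\|_{\dH^{2\eta}}^2=\sum_i \mu_i^{-2\eta}c_i^2.$$
Thus the first estimate follows immediately if I can establish, term by term, that $\mu_i^{1-\alpha+2\eta}\,t^{4\theta}\le (t^2+\mu_i)^2$ for every $i$ and every $t\ge 0$.

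Next I would isolate the single scalar fact that drives everything: for $a,b\ge 0$ and any exponent $p\in[0,2]$ one has $a^p\,b^{\,2-p}\le(a+b)^2$, simply because $a\le a+b$ and $b\le a+b$ while both powers $p$ and $2-p$ are nonnegative. Setting $a=\mu_i$, $b=t^2$, $p=1-\alpha+2\eta$, and observing that $t^{4\theta}=(t^2)^{2\theta}=(t^2)^{1+\alpha-2\eta}=b^{\,2-p}$ (since $2\theta=1+\alpha-2\eta$), the required termwise bound is exactly $a^p\,b^{\,2-p}\le(a+b)^2$. The only point needing attention is that the exponent lies in the admissible range: since $\eta\in[0,(1+\alpha)/2)$ and $\alpha\in(0,1]$, one has $p=1-\alpha+2\eta\in[1-\alpha,2)\subset[0,2)$, so the scalar inequality applies and the first claim is proved after summing in $i$.

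The second estimate is the identical argument for $T_h$ with target space $\dH^0_h=L^2$. Using the $L^2$-orthonormal eigenpairs $(\mu_{i,h},\tpsi_{i,h})$ of $T_h$, $i=1,\dots,M$, and expanding $f=\sum_i c_i\tpsi_{i,h}$, I would reduce $\|t^{1+\alpha}(t^2I+T_h)^{-1}f\|_{L^2}\le\|f\|_{\dH^{1-\alpha}_h}$ to the termwise bound $\mu_{i,h}^{1-\alpha}\,t^{2(1+\alpha)}\le(t^2+\mu_{i,h})^2$, which is once again $a^p\,b^{\,2-p}\le(a+b)^2$ with $a=\mu_{i,h}$, $b=t^2$, and $p=1-\alpha\in[0,1)$; the finite sum over $i$ then finishes the proof with no convergence subtleties. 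I do not expect a genuine obstacle: the whole content is the spectral reduction together with the elementary inequality, and the only step requiring a moment's care is the exponent bookkeeping — checking $2\theta=1+\alpha-2\eta$ and that the resulting power $p$ stays in $[0,2]$ so the monotonicity comparison is legitimate. The degenerate case $t=0$ is trivial, since $4\theta>0$ and $2(1+\alpha)>0$ force the left-hand sides to vanish there while the eigenvalues $\mu_i,\mu_{i,h}$ are strictly positive.
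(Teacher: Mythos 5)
Your proof is correct and follows essentially the same route as the paper: expand $f$ in the $L^2$-orthonormal eigenbasis of $T$ (resp.\ $T_h$), reduce each claim to a termwise scalar bound on the eigenvalues, and check the exponent bookkeeping. The only cosmetic difference is that you justify the scalar bound via $a^p b^{2-p}\le (a+b)^2$ for $p\in[0,2]$, whereas the paper bounds the equivalent quantity $t^{2\theta}\mu_i^{\eta+(1-\alpha)/2}/(t^2+\mu_i)$ by one using a weighted Young/AM--GM inequality (treating $\theta=1$ as a separate case); your version is marginally cleaner but not a genuinely different argument.
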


\begin{proof}   [Proof of Theorem~\ref{t:tbmtbh}.]
We first note that 
$$(I-\pi_h) (T^{\beta} - T_h^{\beta} \pi_h) f = 
(I-\pi_h) T^{\beta}  f 
$$ 
so that 
$$\|(I-\pi_h) (T^{\beta} - T_h^{\beta} \pi_h) f\|_{L^2} \le 
Ch^{2\alpha} \|T^{\beta} f\|_{H^{2\alpha}(\Omega)}\le Ch^{2\alpha}
  \|T^\beta f\|_{\dH^{2\alpha}}\le  Ch^{2\alpha} \|f\|_{\dH^{2\delta}}.$$
Thus, we need only bound 
$$\|\pi_h (T^{\beta} - T_h^{\beta} \pi_h) f\|_{L^2}.
$$
Applying Theorem~\ref{l:1} and \eqref{e:pigo} 
implies
\begin{equation}\label{e:split}
\begin{split}
\pi_h(T^{\beta} - T_h^{\beta} \pi_h) f=&C_\beta^{-1} \pi_h\int_0^1
t^{2\beta-1} (T_1(t)-T_{1,h}(t))   f \ dt
\\
&+ C_\beta^{-1} \pi_h\int_0^1 t^{1-2\beta}(T_2(t)-T_{2,h}(t))  f \ dt\\
&=: I_1+I_2.
\end{split}
\end{equation}

We first consider $I_1$.  Setting $t_0=h^{\alpha/\beta}$, 
applying \eqref{e:b1}, \eqref{e:b2} and the triangle inequality, we obtain
\begin{equation}
\begin{split}
     \bigg \| \pi_h \int_0^{t_0}
t^{2\beta-1} (T_1(t)-T_{1,h}(t))   f \ dt\bigg \|_{L^2}
&\leq 
2 \bigg(\int_0^{t_0} t^{2\beta-1} \ dt\bigg)\ \| f\|_{L_2}\\&
=
\frac{ h^{2\alpha}}{\beta}   \| f \|_{L_2}.
\label{e:smallt}
\end{split}
\end{equation}
Hence, it remains to derive an upper bound for 
\beq
\bigg \| \pi_h \int_{t_0}^1
t^{2\beta-1} (T_1(t)-T_{1,h}(t))   f \ dt\bigg \|_{L_2}.
\label{it01}
\eeq
It is easy to check that  $T_1(t)=T (t^2I+T)^{-1}$.
Similarly, defining $(t^2I+T_h)^{-1}$  to be the inverse of $t^2I+T_h$ on
$L_2(\Omega)$, we find that
$T_{1,h}(t)=T_h (t^2I+T_h)^{-1}= (t^2I+T_h)^{-1} T_h$ on $L_2(\Omega)$.
Thus,
\beq\bal
\pi_h(T_1(t)-T_{1,h}(t))&=\pi_h(T(t^2I+T)^{-1} -(t^2I+T_h)^{-1}T_h)\\
&=t^{2}(t^2I+T_h)^{-1} \pi_h (T-T_h) (t^2I+T)^{-1}.
\eal
\label{e:tdif}
\eeq

We first consider the case of $\alpha\ge \beta$. 
Note that
$\gamma<(1+\alpha)/2$ and the theorem in this case will 
follow for all $\delta$ if we can prove it for $\delta\in
[\gamma,(1+\alpha)/2)$
which we shall henceforth assume.  

Applying  \eqref{e:tdif} shows that 
\eqref{it01} can be bounded by 
\begin{equation}\label{e:I1}\bal
\int_{t_0}^1 t^{-1+2\delta+2\beta-2\alpha}&\big[
 \|t^{1+\alpha} (t^2 I+
T_h)^{-1}\pi_h\|_{\dH^{1-\alpha}\to L_2} \|(T-T_h)\|_{\dH^{\alpha-1}\to
  \dH^{1-\alpha}}\\
&  \|t^{1+\alpha-2\delta}(t^2I+T)^{-1}\|_{\dH^{2\delta} \to
    \dH^{\alpha-1}}\big] \,dt \  \|f\|_{\dH^{2\delta}}.\eal
\end{equation}
Corollary \ref{c:apprx} shows that 
\beq
\|(T-T_h)\|_{\dH^{\alpha-1}\to
  \dH^{1-\alpha}}\preceq h^{2\alpha} 
\label{tmth}
\eeq
while the first part of Lemma~\ref{techlem} (with $\eta=\delta\in
[\gamma,(1+\alpha)/2)\subseteq [0,(1+\alpha)/2)$) 
implies that the third norm of \eqref{e:I1} is uniformly bounded.  
Appying the second part of Lemma~\ref{techlem} and  the equivalence of
norms \eqref{p:discretenorm} gives
\beq\bal
\|t^{1+\alpha} (t^2 I+
T_h)^{-1}\pi_h f\|_{L^2} \le \|\pi_h f\|_{\dH_h^{1-\alpha}}
\le C \|\pi_h f\|_{\dH^{1-\alpha}}.\eal 
\label{almost}
\eeq
Since $\pi_h$ is bounded as an operator from $L_2(\Omega)$ to
$L_2(\Omega)$ and from $H^1_0(\Omega)$ to $H^1_0(\Omega)$ due to the
quasi-uniformity \eqref{e:quasi} of the triangulations, it follows 
by interpolation that 
$\pi_h$ is also bounded from $\dH^{1-\alpha}$ to $\dH^{1-\alpha}$.
Combining this with \eqref{almost} shows that the first norm of
\eqref{e:I1} is also uniformly bounded.  
It follows that when $\alpha\ge
\beta$, 
$$\bal
\|I_1\|_{L_2} &\le 
\frac{ h^{2\alpha}}{\beta}   \| f \|_{L_2}+C h^{2\alpha} \int_{t_0}^1 t^{-1+2\delta-2\gamma}\, dt\, \|f\|_{\dH^{2\delta}}\\& \le
\left \{ \begin{aligned} C h^{2\alpha} \ln(h^{-1})\|f\|_{\dH^{2\delta}}&:\qquad \hbox{ when }  
\delta=\gamma\\
C h^{2\alpha}\|f\|_{\dH^{2\delta}}\kern 2.0em &:\qquad\hbox{ when
}\delta>\gamma. 
\end{aligned}
\right.\eal$$

We next bound $I_1$ when $\alpha<\beta$. In this case, we bound \eqref{it01} by 
$$\bal 
\int_{t_0}^1 t^{-1+2\beta-2\alpha}&\bigg[
 \|t^{1+\alpha} (t^2 I+
T_h)^{-1}\pi_h\|_{\dH^{1-\alpha}\to L_2} \|(T-T_h)\|_{\dH^{\alpha-1}\to
  \dH^{1-\alpha}}\\
&  \|t^{1+\alpha}(t^2I+T)^{-1}\|_{L_2 \to
    \dH^{\alpha-1}}\bigg] \,dt \  \|f\|_{L_2}.\eal
$$
We  use the second part of Lemma ~\ref{techlem} for the first term,
\eqref{tmth} for the middle term and  the first part of 
Lemma~\ref{techlem} with $\eta=0$ for the for the last. 
The bound
$$\|I_1\|_{L^2} \preceq h^{2\alpha} \|f\|_{L^2} 
$$ 
follows immediately.

Finally, we derive a bound for $\|I_2\|_{L_2}$.
Since  $T_2(t):= t^{-2} T_1(t^{-1})$ and $T_{2,h}(t):= t^{-2} T_{1,h}(t^{-1})$, we use \eqref{e:tdif} to compute
$$\bal
\pi_H(T_2(t)-T_{2,h}(t))& = t^{-2} \pi_h(T_1(t^{-1})-T_{1,h}(t^{-1}))\\
&=t^{-4} (t^{-2}I+T_h)^{-1}\pi_h (T-T_h) (t^{-2}I+T)^{-1}\\
&= (I+t^2T_h)^{-1} \pi_h (T-T_h) (I+t^2T)^{-1}.
\eal
$$

We clearly have 
$$
\|(I+t^2T_h)^{-1}\pi_h\|_{L_2\to L_2} \leq 1 \quad \hbox{and}\quad
  \|(I+t^2T)^{-1}\|_{L_2 \to L_2} \leq 1.
$$
Combining this with Corollary~\ref{c:apprx} gives
$$\bal
\| \pi_h (T_2(t)-T_{2,h}(t))\|_{L_2} &  \le \big[\|(I+t^2T_h)^{-1}\pi_h\|_{L_2 \to L_2}
\|T-T_h\|_{L_2 \to L_2}\\
&\qquad 
 \|(I+t^2T)^{-1}\|_{L_2\to L_2}\bigg]\preceq
h^{2\alpha}.
\eal
$$
Hence 
$$
\|I_2\|_{L_2} \preceq h^{2\alpha}
\int_{0}^1 t^{1-2\beta} \,dt
  \|f\|_{L_2}
\preceq  h^{2\alpha} \|f\|_{L_2}.
$$
The theorem follows by gathering the estimates for 
$\|I_1\|_{L_2}$ and $\|I_2\|_{L_2}$.
\end{proof}

\begin{remark}
It is only necessary to break the estimate for $I_1$
  into integrals over $[0,t_0]$ and $[t_0,1]$ when $\alpha\ge \beta$ and
$\delta=\gamma$.  In the remainder of the cases, the argument bounding
$[t_0,1]$ can be used to bound the integral over $[0,1]$.
\end{remark}

\begin{remark}[Comparison with \cite{FijitaSuzuki}]  The estimate of \cite{FijitaSuzuki} is that 
$$ \| (T^{\beta  } - T_h^{\beta}\pi_h) f\|_{L_2} 
\leq C h^{2\beta}\|f\|_{L_2}, \Forall f\in L_2,$$
holds when $\alpha=1$.  This is easily obtained from the above 
proof by taking $t_0=h$ and $\delta=0$ in which case
$$
\|I_1\|_{L_2} \le \frac {h^{2\beta}}\beta   \| f \|_{L_2}+C
h^{2\alpha } \int_{h}^1 t^{-1+2\beta-2\alpha}\, dt\, \|f\|_{L_2}
\le C h^{2\beta } \|f\|_{L_2}.
$$
\label{FijitaSuzuki}
\end{remark}

\begin{proof}[Proof of Lemma~\ref{techlem}.]   For the first inequality
  of the lemma, 
  we note that  $0\le \eta <(1+\alpha)/2$ implies 
$0< \theta \le 1$. 
Let $\varphi \in \dH^{2\eta}$ and let $c_i=\langle
\varphi,\tpsi_i\rangle_0$ so that $\varphi=\sum_{i=1}^\infty c_i
\tpsi_i$. 
The definition of the norm in $\dH^{\alpha-1}$ implies that
$$
\| t^{2\theta}  (t^2+T)^{-1} \varphi\|_{\dH^{\alpha-1}}^2 =
\sum_{i=1}^\infty \left( \frac{t^{2\theta}
    \mu_i^{\eta+(1-\alpha)/2}}{t^2+\mu_i} \right)^2
\mu_i^{-2\eta} c_i^2.
$$
Note that  $\theta+\eta+(1-\alpha)/2=1$.    Now $\theta=1$ only if
$\alpha=1$ and $\eta=0$.  In this case, 
the fraction in the parenthesis above  is 
$(t^2/(t^2+\mu_i))$ which is clearly bounded by one.  For $\theta\in
(0,1)$, a Young's inequality with $\frac 1 p=\theta$ and $\frac 1 q = \eta +
(1-\alpha)/2$  gives
$$\frac{t^{2\theta}
    \mu_i^{\eta+(1-\alpha)/2}}{t^2+\mu_i} \le \frac 1 p 
\frac  {t^2}   {t^2+\mu_i} + \frac 1q \frac {\mu_i} {t^2+\mu_i}
\le 1.$$
Thus, 
$$\| t^{2\theta}  (t^2I+T)^{-1} \varphi\|_{\dH^{\alpha-1}}^2 \le 
\sum_{i=1}^\infty \mu_i^{-2\eta} c_i^2 =
\|\varphi\|_{\dH^{2\eta}}^2.
$$
This proves the first inequality of the lemma.

The second inequality is similar.  Indeed,
expanding $f=\sum_{i=1}^M c_{i,h} \tpsi_{i,h}$ with $c_{i,h} = \hzip f{\tpsi_{i,h}} $, we find
$$
A^2:=\| t^{1+\alpha} (t^2I+T_h)^{-1}  f\|^2_{L_2}
=  \sum_{i=1}^M \left(\frac{t^{1+\alpha}}{t^2+\mu_{i,h}}\right)^2c_{i,h}^2.
$$
Thus,
$$
A^2 = \sum_{i=1}^M \left(\frac{t^{1+\alpha} \mu_{i,h}^{\frac{1-\alpha}2}}{t^2+\mu_{i,h}}\right)^2 \mu_{i,h}^{\alpha-1}c_{i,h}^2.
$$
The fraction in the parenthesis is clearly bounded by one when $\alpha=1$.
For $\alpha\in (0,1)$, a Young's inequality with $\frac 1 p = \frac{1+\alpha}{2}$ 
and $\frac 1 q = \frac{1-\alpha}2$ yields 
$$
\left| \frac{t^{1+\alpha} \mu_{i,h}^{\frac{1-\alpha}2}}{t^2+\mu_{i,h}} \right|
\leq \frac 1 p \frac{t^2}{t^2+\mu_{i,h}} + \frac 1 q  \frac{\mu_{i,h}}{t^2+\mu_{i,h}} \leq \frac 1 p + \frac 1 q =1
$$
It follows that $A\le \| f \|_{\dH^{1-\alpha}_h}$ and completes the
proof of the lemma.
\end{proof}

\subsection{Numerical Illustration}

We consider the unit square $\Omega=(0,1)^2 \subset \mathbb R^2$ and study numerically the efficiency in the approximation of $u$ the solution of
$$
(-\Delta)^\beta u = f, \quad \text{in} \quad \Omega, \qquad u=0 \quad \text{on} \quad \partial \Omega
$$
where $f:\Omega \rightarrow \mathbb R$ is given for all $(x_1,x_2)\in \Omega$ by
\begin{equation}\label{e:f}
f(x_1,x_2) = \left\lbrace \begin{array}{ll}
1 &\qquad \text{if} \quad (x_1-0.5)(x_2-0.5)>0\\
0 &\qquad \text{otherwise.}
\end{array}\right.
\end{equation}
Notice that $f\in \dot H^{1/2-\epsilon}(\Omega)$ for all $\epsilon >0$ (but not $\epsilon=0$) and consequently Theorem  \ref{t:tbmtbh} and Remark \ref{dlessamb} predict an error  decay proportional to 
$$
\ln(1/h) h^{2s},
$$ 
with 
$$
s=\left\lbrace \begin{array}{ll}
1, &\quad \text{if} \quad \beta > \frac 34, \\
2(\beta+\frac 14), &\quad \text{otherwise}.
\end{array}\right.
$$
The errors 
$\|u-u^N_h\|_{L_2}$ are computed using the first 300 modes in $x$ and
$y$ (90000 modes) of the Fourier representation of the exact solution
and the number of quadrature points is taken large enough not to
influence the space discretization error.  
Four meshes are used to compute three observed rate of convergence
$$
OROC_i:=\ln\left(e_i/e_{i+1}\right)/\ln\left(h_i/h_{i+1}\right), \quad i=1,2,3,
$$
where $e_i$ are the $L_2$ errors and $h_i$ the diameter of the quasi-uniform subdivision $i$. 
The average of the three observed rate of convergence $AROC$ are reported in Table \ref{t:observed}
and are comparable with the rate predicted by Theorem \ref{t:tbmtbh}
(see, Remark~\ref{dlessamb}).
\begin{table}[h!]
\setlength{\extrarowheight}{1ex}
\begin{center}
\begin{tabular}{c|c|c|c|c|c|c|c||c|c}
  & \multicolumn{7}{c||}{$\beta < \frac 34$} & \multicolumn{2}{c}{$\beta > \frac 34$}\\
  \cline{2-10}
  & 0.1 &0.2 & 0.3 & 0.4 & 0.5 & 0.6 & 0.7 & 0.8 & 0.9 \\
\hline
AROC & 0.92 & 1.06 & 1.22   & 1.4  &  1.52    & 1.72    & 1.86    & 1.94     & 1.96 \\
THM & 0.7 & 0.9 & 1.1   & 1.3 & 1.5 & 1.7 & 1.9  & 2.0 & 2.0 \\
\end{tabular}
\caption{Average observed rate of convergence (AROC) for different values of $\beta$ compared with
  the rates predicted by Theorem  \ref{t:tbmtbh} (THM) ignoring the log term.}
\end{center}
\label{t:observed}
\end{table}

Figure \ref{f:conv} depicts the decay of the error $\|u-u^N_h\|_{L_2}$ for different values of $\beta$.
\begin{figure}
\centerline{\includegraphics[width=0.75\textwidth]{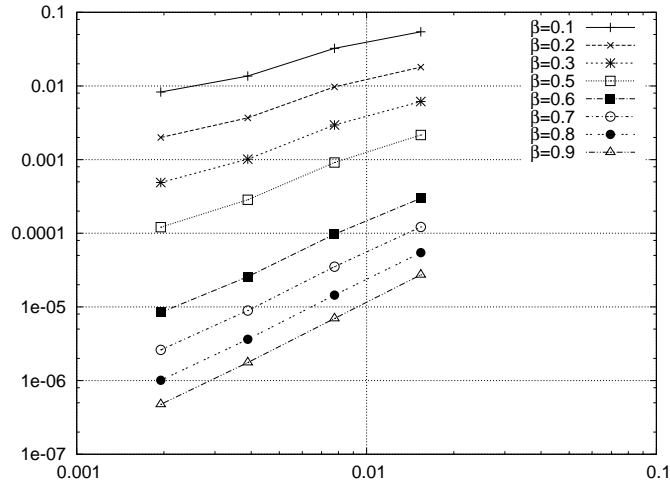}}
\caption{$L_2$-errors for different values of $\beta$ versus the approximate mesh size diameter used for the computations. 
The number of quadrature points is taken large enough not to influence the errors. 
The corresponding average observed rate of convergence are reported in Table \ref{t:observed}.}
\label{f:conv}
\end{figure}

The solution for $\beta=0.1$, $0.5$ and $0.8$ are depicted in figure \ref{f:solutionchecker} together with a cut over the horizontal line $\lbrace y=0.25 \rbrace$.
\begin{figure}
\begin{center}
\begin{tabular}{cc}
\includegraphics[width=0.45\textwidth]{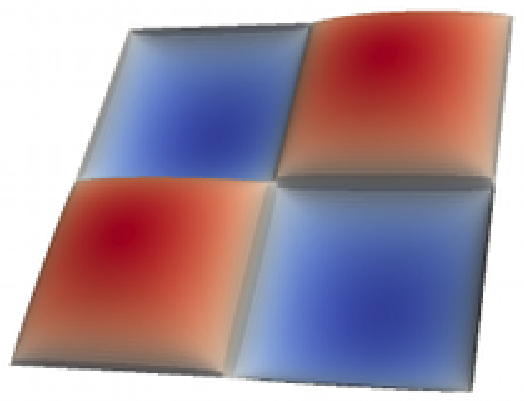}&
\includegraphics[width=0.45\textwidth]{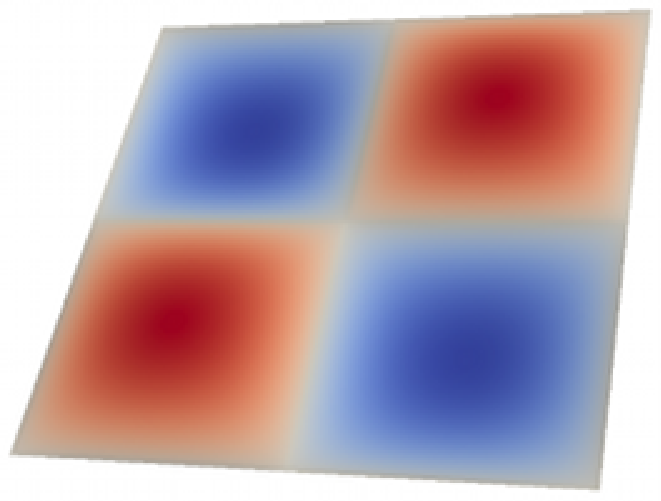}\\
\includegraphics[width=0.45\textwidth]{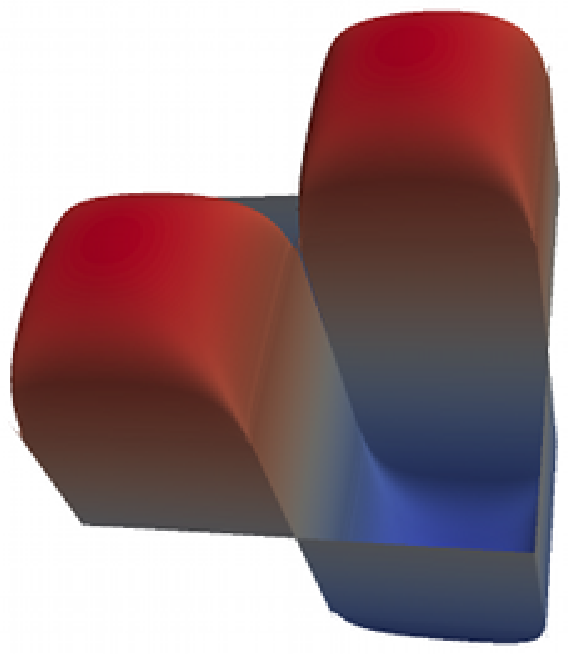}&
\includegraphics[width=0.45\textwidth]{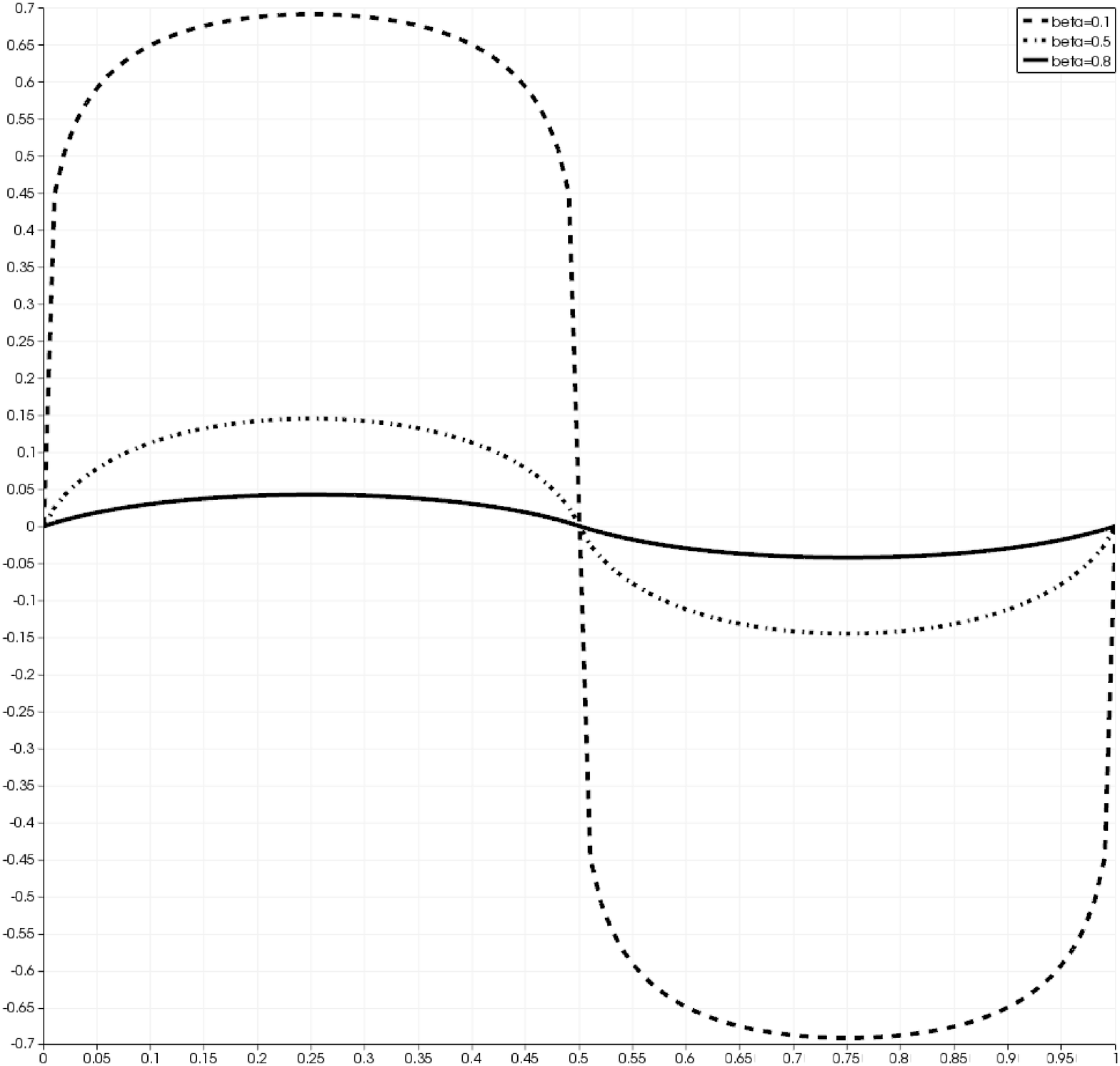}
\end{tabular}
\end{center}
\caption{Values of $(-\Delta)^{-\beta} f$ when $f$ is the checkerboard function \eqref{e:f} for $\beta=0.5$ (top left), $\beta=0.8$ (top right) and $\beta=0.1$ (bottom left).
The difference in shape and magnitude is highlighted in the bottom right plots which depicts the values of these function along the line $\lbrace y=0.25\rbrace$ ($\beta=0.1$ dashed line, $\beta=0.5$ doted line, $\beta=0.8$ solid line).}
\label{f:solutionchecker}
\end{figure}

%
%
%

\section*{Acknowledgment}
This work was supported in part
by award number KUS-C1-016-04 made by King Abdulla University of
  Science and Technology (KAUST).
The first author was
also  supported in part by
  the National Science Foundation through Grant DMS-1254618 while the 
second  was also supported in part by
  the National Science Foundation through Grant DMS-1216551.
In addition, the authors would like to thank E. Otarola, R.H. Nochetto and A. Salgado for fruitful discussions on the subject.

\bibliographystyle{plain}

\end{document}